\theoremstyle{definition}
\newtheorem{definition}{Definition}[section]
\theoremstyle{plain}
\newtheorem{theorem}{Theorem}[section]
\newtheorem{lemma}{Lemma}[section]
\newtheorem{proposition}{Proposition}[section]
\newtheorem{corollary}{Corollary}[section]
\newtheorem{conjecture}{Conjecture}
\newtheorem{question}{Question}
\theoremstyle{remark}
\newtheorem{remark}{Remark}[section]
\newcommand{\on}{\operatorname}
\newcommand{\bra}{\langle}
\newcommand{\ket}{\rangle}
\newcommand{\mf}{\mathfrak}
\newcommand{\mc}{\mathcal}
\newcommand{\ra}{\rightarrow}
\newcommand{\id}{\text{Id}}
\newcommand{\Hom}{\text{Hom}}
\newcommand\xr{\xrightarrow}
\newcommand{\g}{\mathfrak{g}}
\newcommand{\s}{\mathfrak{s}}
\newcommand{\affg}{\widehat{\mathfrak{g}}}
\newcommand{\h}{\mathfrak{h}}
\newcommand{\W}{\mathscr{W}}
\newcommand{\Z}{\mathbb{Z}}
\newcommand{\Q}{\mathbb{Q}}
\newcommand{\C}{\mathbb{C}}
\newcommand{\bH}{\mathbb{H}}
\newcommand{\ag}{\widehat{\g}}
\newcommand{\cM}{\mathcal{M}}
\newcommand{\cY}{\mathcal{Y}}
\newcommand{\cC}{\mathcal{C}}
\newcommand{\cCloc}{\mathcal{C}^{\text{loc}}}
\newcommand{\cD}{\mathcal{D}}
\newcommand{\cN}{\mathcal{N}}
\newcommand{\lam}{\lambda}
\newcommand{\Lam}{\Lambda}
\newcommand{\+}{\oplus}
\renewcommand{\*}{\otimes}
\newcommand{\comm}[2]{\mathrm{Com}(#1,#2)}
\newcommand{\vir}{\mathrm{Vir}}
\newcommand{\svir}{\mathrm{sVir}_0}
\newcommand{\sVir}{\mathrm{sVir}}
\newcommand{\FP}{\mathrm{FP}}
\newcommand{\cw}{\mathrm{cw}}
\newcommand{\Mod}[1]{\mathsf{#1}}
\newcommand{\fus}{\boxtimes}
\newcommand{\alg}[1]{\mathsf{#1}}
\newcommand{\aA}{{\alg{A}}}
\newcommand {\cU}{\mathcal{U}}
\newcommand{\cF}{\mathcal{F}}
\newcommand{\cG}{\mathcal{G}}
\newcommand {\cW}{\mathcal{V}}
\newcommand{\mU}{{\Mod{U}}}
\newcommand{\mW}{{\Mod{V}}}
\newcommand{\HHom}{\mathrm{Hom}}
\begin{document}

\title{On lisse non-admissible minimal  and principal W-algebras}

\author[T~Arakawa]{Tomoyuki Arakawa}
\address[Tomoyuki Arakawa]{
Research Institute for Mathematical Sciences\\
Kyoto University\\
Kyoto, Japan, 606-8502.
}
\email{arakawa@kurims.kyoto-u.ac.jp}
\thanks{T.A is partially supported by JSPS Kakenhi Grant numbers 21H04993 and 19KK0065}

\author[T~Creutzig]{Thomas Creutzig}
\address[Thomas Creutzig]{
Department of Mathematical and Statistical Sciences\\
University of Alberta\\
Edmonton, Canada, AB T6G 2G1.
}
\email{creutzig@ualberta.ca}
\thanks{T.C.~is supported by a NSERC Discovery Grant}

\author[K~Kawasetsu]{Kazuya Kawasetsu}
\address[Kazuya Kawasetsu]{
Priority Organization for Innovation and Excellence\\
Kumamoto University\\
Kumamoto, Japan, 860-8555.
}
\email{kawasetsu@kumamoto-u.ac.jp}
\thanks{K.K. is partially supported by MEXT Japan ``Leading Initiative for Excellent Young Researchers (LEADER)'',
JSPS Kakenhi Grant numbers 19KK0065, 21K13775 and 21H04993.
}

\begin{abstract}
We discuss a possible generalization of a result by the third-named author on the rationality of non-admissible minimal W-algebras. 
We then apply this generalization to finding rational non-admissible principal W-algebras.
\end{abstract}

\maketitle

%\tableofcontents

\section{Introduction}   
%Let $\g$ be a simple Lie algebra, $k\neq -h^\vee$ a non-critical number and $f$ a nilpotent element of $\g$.
%Let  $\W^k(\g,f)$ and $\W_k(\g,f)$ denote the universal 
%and simple W-algebras, respectively \cite{KacRoaWak03}.
%The principal W-algebras $\W^k(\g)=\W^k(\g,f_{prin})$ with principal nilpotent elements $f=f_{prin}$ coincide with
%the %traditional 
%W-algebras defined in \cite{FF90}.
%For instance, $\W^k(\mathfrak{sl}_2)$ are isomorphic to the Virasoro vertex algebras and $\W^k(\mathfrak{sl}_3)$ are the
%Zamolodchikov's $W_3$-algebras. 
%Let $\W_k(\g,f)$
%denote the unique simple graded quotient of $\W^k(\g,f)$,
%$\W_k(\g)=\W_k(\g,f_{prin})$.
%Then  for $p,q\in \Z_{\geq 2}$,
% $(p,q)=1$,
% $\W_{p/q-2}(\mathfrak{sl}_2)$
% is the Viraosoro
%minimal model of central charge $c_{p,q}=1-6(p-q)^2/pq$, which we denote also by
%  $\cM(p,q)$.
%%minimal model of central charge $c_{p,q}=1-6(p-q)^2/pq$.
%Let $\theta$ be a highest root of $\g$,
%$f_\theta$ a corresponding lowest root vector of $\g$, which is a minimal nilpotent element of $\g$.
%The W-algebras associated to $f=f_\theta$ are called  minimal W-algebras.
%For $\g=\mathfrak{sl}_2$, the minimal W-algebras are the same as principal W-algebras, since $f_\theta$ is principal in this case.

Let $\g$ be a simple Lie algebra, 
$f$ a nilpotent element of $\g$,
and $k\neq -h^\vee$ a non-critical number, 
where $h^{\vee}$ is the dual Coxeter number of $\g$.
 Let $\W^k(\g,f)$ and $\W_k(\g,f)$ denote the universal and simple {\em W-algebras}, respectively \cite{KacRoaWak03}. 
 The {\em principal W-algebras} $\W^k(\g)=\W^k(\g,f_{prin})$ with principal nilpotent elements $f=f_{prin}$ coincide with the W-algebras defined in \cite{FF90}. 

It is one of the central problems in the study of W-algebras to classify lisse and rational simple W-algebras $\W_k(\g,f)$. 
%In the case of the Virasoro vertex algebras $\W_k(\mathfrak{sl}_2)$, these are exactly the minimal series Virasoro vertex algebras %${\cM(p,q)}$ 
%(\cite{W}). 
 Frenkel, Kac, and Wakimoto \cite{FKW92} conjectured 
 the  existence and construction of {\em minimal series W-algebras}, that is,
 the lisse, rational principal W-algebras that generalize the minimal series Virasoro vertex algebras.
 The conjecture \cite{FKW92} 
 was proved 
 by the first-named author \cite{Ara09b,A2012Dec};
 The minimal series W-algebras are  obtained by non-degenerate admissible affine vertex algebras through quantum Hamiltonian reduction. 
The conjecture of Frenkel, Kac, and Wakimoto was generalized by Kac and Wakimoto \cite{KacWak08} to non-principal W-algebras. This was further generalized by the first named author \cite{Ara09b} to a conjecture stating that each admissible affine vertex algebra produces exactly one lisse, rational W-algebra, by performing the quantum Hamiltonian reduction associated with the unique nilpotent orbit that is open in the associated variety \cite{Ara12} of the admissible affine vertex algebra. At this point, it was believed that all lisse, rational W-algebras should be {\em admissible}, that is, they should come from admissible affine vertex algebras in this manner \cite{KacWak08}.

%While the conjecture of Kac, Wakimoto and  the first named author
%was proved    by \cite{AEkeren19,Justine,McRae2},
%the third named author \cite{Kawa15}) showed that there exist
% lisse, rational non-admissible W-algebras.
% More precisely,
% it was shown in  \cite{Kawa15}
% that for each member $\g$ of Deligne's exceptional series  
% $a_1\subset a_2\subset g_2\subset d_4\subset f_4\subset e_6\subset e_7\subset e_8$
% the minimal W-algebra  $\W_{-h^\vee/6}(\g,f_\theta)$
% is lisse and rational,
%  where $h^{\vee}$ is the dual Coxeter number of $\g$.
% For  $\g=d_4,e_6,e_7,e_8$,
% $-h^\vee/6$ is a negative integer and hence 
% $\W_{-h^\vee/6}(\g,f_\theta)$ is not at an admissible level.
%%In
%%(one of Thomas's papers?),

While the conjecture of Kac, Wakimoto, and the first named author was proved by \cite{AEkeren19, Fas22, McRaeRational}, the third-named author \cite{Kawa15} showed that there exist in fact lisse, rational {\em non-admissible} W-algebras. More precisely, it was shown in \cite{Kawa15} that for each member $\g$ of {\em Deligne's exceptional series} \cite{De96}
\begin{align*}
a_1 \subset a_2 \subset g_2 \subset d_4 \subset f_4 \subset e_6 \subset e_7 \subset e_8,
\end{align*} the minimal W-algebra $\W_{-h^\vee/6}(\g, f_\theta)$ is lisse and rational, where $h^\vee$ is the dual Coxeter number of $\g$. For $\g = d_4, e_6, e_7, e_8$, the level $-h^\vee/6$ is a negative integer, and hence $\W_{-h^\vee/6}(\g, f_\theta)$ is not at an admissible level.

%Let
After the work \cite{Kawa15},
it was shown in  \cite{AM15}  that, for a member $\g$ of Deligne's exceptional series,
 the minimal W-algebra $\W_{k(m)}(\g, f_\theta)$ 
at level
 \begin{align}
k(m)=-\frac{h^{\vee}}{6}+m
\end{align}
is lisse for
 all integer $m$ that is equal to or greater than $-1$.
Again, 
for $\g = d_4, e_6, e_7, e_8$, 
the lisse W-algebra $\W_{k(m)}(\g, f_\theta)$ is not at an admissible level.
Since $\W_{k(-1)}(\g, f_\theta)=\C$ \cite{AM15}
and $\W_{k(0)}(\g, f_\theta)$ is rational \cite{Kawa15} as explained above,
it is natural to ask the following question.
\begin{question}\label{question}
Let $\g$ be 
a simple Lie algebra of type $d_4$, $e_6$, $e_7$, $e_8$,
and let $m$ be an  integer greater than or equal to $-1$.
Is the lisse mimimal W-algebra $\W_{k(m)}(\g,f_{\theta})$ is rational?
\end{question}

Sun \cite{Sun} recently gave an affirmative answer\footnote{Sun \cite{Sun} verified the rationality only at the level
of characters, but it is  possible to prove the rationality rigorously.} to 
 \Cref{question} for $m=1$ and $\g = e_6, e_7, e_8$,  by making use of  Hecke operators.

 \begin{conjecture}\label{conj:minimal}
Let $\g$, $m$ be as in \Cref{question}.
%a simple Lie algebra of type $d_4$, $e_6$, $e_7$, $e_8$,
%and let $m$ be an  integer greater than or equal to $-1$.
The simple minimal $W$-algebra  $\W_{k(m)}(\g,f_{\theta}) $ is   rational if and only if 
$k(m)<0$, that is, $m=-1,0$ for $d_4$, $m=-1,0,1$ for $e_6$, 
$m=-1,0,1,2$ for $e_7$ and $m=-1,0,1,2,3,4$ for $e_8$.

\end{conjecture}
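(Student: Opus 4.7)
The plan is to prove the two directions of the equivalence separately, leveraging the generalization of Kawasetsu's rationality criterion developed earlier in this paper. Several of the ``if'' cases are already settled: $m=-1$ is trivial since $\W_{k(-1)}(\g,f_\theta)=\C$ by \cite{AM15}, $m=0$ is the main theorem of \cite{Kawa15}, and $m=1$ for $\g\in\{e_6,e_7,e_8\}$ is due to Sun \cite{Sun}. What remain are the four pairs $(\g,m)\in\{(e_7,2),(e_8,2),(e_8,3),(e_8,4)\}$. For each, I would (a) enumerate the simple ordinary modules by applying the minimal quantum Hamiltonian reduction functor to suitable simple modules over the affine vertex algebra at level $k(m)$, (b) verify that Zhu's algebra $A(\W_{k(m)}(\g,f_\theta))$ is semisimple of the expected dimension using lisseness to bound its associated graded, and (c) rule out non-split self-extensions of ordinary modules, invoking the generalized criterion.

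For the ``only if'' direction, note that $k(m)\geq 0$ forces $k(m)$ to be a non-negative integer since $h^\vee/6$ is an integer for $\g=d_4,e_6,e_7,e_8$. The strategy is to exhibit an indecomposable but reducible (logarithmic) module, thereby violating semisimplicity. The most natural source is quantum Hamiltonian reduction applied to a carefully chosen non-semisimple module over $V^{k(m)}(\g)$. Alternatively, one could compute Zhu's algebra directly and look for nilpotent elements, or compare the modular $S$-matrix predicted by candidate characters with what is forced by rationality and detect an incompatibility.

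The main obstacle is the ``if'' direction for $\g=e_8$ at $m=2,3,4$. Here explicit module-theoretic computations are impractical and a uniform structural argument seems necessary. One promising route is to extend Sun's Hecke operator method to access the modular data for $m\geq 2$; another is to realize $\W_{k(m)}(\g,f_\theta)$ as a coset or simple current extension inside a larger known rational vertex operator algebra, so that rationality transfers along the realization. The uniformity of Deligne's exceptional series strongly hints that a single mechanism should cover all non-admissible rational minimal W-algebras in this family simultaneously, rather than a case-by-case verification.
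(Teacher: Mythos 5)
There is a fundamental mismatch here: the statement you are addressing is stated in the paper as a \emph{conjecture}, and the paper does not prove it. The authors explicitly supply only partial evidence: $m=-1$ is immediate from $\W_{k(-1)}(\g,f_\theta)=\C$, $m=0$ is Kawasetsu's theorem, and for $m=1$ (types $e_6,e_7,e_8$) the paper proves rationality and lisseness rigorously via the conformal embedding $L_2(\g^\natural)\otimes\svir(p,q)\hookrightarrow\W_{k(1)}(\g,f_\theta)$, obtained by identifying $\W_{k(1)}(\g,f_\theta)$ with a double commutant inside $L_2(\g)$ and computing branching rules through the $N=1$ super Virasoro tensor category (note that Sun's work, which you lean on for $m=1$, is acknowledged in the paper to verify rationality only at the level of characters). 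The cases $(e_7,2)$, $(e_8,2),(e_8,3),(e_8,4)$ and the entire ``only if'' direction are left open. Your proposal likewise does not close them: it is a plan whose decisive steps are unexecuted, and you say so yourself (``the main obstacle is the `if' direction for $e_8$'').

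Beyond the fact that the proposal is a sketch, several of its intermediate steps would not go through as stated. In (a), enumerating simple ordinary modules by applying $H^0_{DS,f_\theta}$ to simple $L_{k(m)}(\g)$-modules presupposes a classification of ordinary modules of the affine vertex algebra at these non-admissible levels, which is not available; the exactness and collapsing results used in the paper (Arakawa's $H^0_{DS,f_\theta}(L_k(\g))\cong\W_k(\g,f_\theta)$ for $k\notin\Z_{\geq0}$, the admissible-level representation theory) do not supply it here. In (b)--(c), semisimplicity of Zhu's algebra together with lisseness does not by itself give rationality; one must control extensions among all modules (e.g.\ in the sense of McRae's criterion), and ``rule out non-split self-extensions'' is precisely the hard content, not a verification step. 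For the ``only if'' direction, reducing a non-semisimple $V^{k(m)}(\g)$-module produces a module over the universal algebra $\W^{k(m)}(\g,f_\theta)$, and there is no argument that it descends to a nonzero indecomposable module over the simple quotient $\W_{k(m)}(\g,f_\theta)$; no candidate module, Zhu-algebra nilpotent, or modular obstruction is actually exhibited. So the proposal cannot be credited as a proof of the statement, nor does it reproduce the paper's actual (partial) argument, which for $m=1$ runs through the coset realization $\comm{L_2(\g^\natural)\otimes\svir(p,q)}{L_2(\g)}$ and its double commutant rather than through Zhu-algebra or reduction-of-modules computations.
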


% Later more examples of non-admissible lisse, rational W-algebras were found in e.g. \cite{AM},  and in \cite{Xie-Yan,Bohan-Xie-Yan}
%the existence of a large number of non-admissible lisse, rational W-algebras were a conjectured  %in \cite{Xie-Yan}
%in connection with the 4D/2D duality \cite{BLLPRvR}.
In particular,
 we conjecture that, for  $\g = d_4, e_6, e_7, e_8$,  the minimal W-algebra $\W_{k}(\g, f_\theta)$ is lisse and non-rational for a non-negative integer $k$,
 which would provide a new family of 
lisse {\em  logarithmic} vertex algebras.

 \smallskip

Let us mention that
\Cref{conj:minimal} can be used to find new lisse, rational principal W-algebras as well, due to the following assertion.

\begin{theorem}\label{sec:dualsimple}
Let $\g$, $k(m)$, be as above with $m\geq 0$
such that $k(m)<0$.
Define the number  $\ell(m)$ by
\begin{equation*}%\label{eqn:duallevel}
\frac{1}{k(m)+h^\vee}+\frac{1}{\ell(m)+h^\vee}=1.
\end{equation*}
There is an embedding
$$\W_{k(m)}(\g,f_{\theta})\*\W_{\ell(m)}(\g)\hookrightarrow \W_{k(m-1)}(\g,f_{\theta})\* L_1(\g)$$
of conformal vertex algebras,
where  $L_1(\g)$ is equipped with the {\em Urod conformal vector} \cite{AraCreFei22}.
Moreover,  
$\W_{k(m)}(\g,f_{\theta})$ and $\W_{\ell(m)}(\g)$ form a dual pair in 
$\W_{k(m-1)}(\g,f_{\theta})\* L_1(\g)$.
\end{theorem}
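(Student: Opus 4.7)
The plan is to derive this embedding from the Urod construction of \cite{AraCreFei22} applied at the level of universal W-algebras, and then descend to simple quotients. The Urod theorem provides, for generic levels $k$ and $\ell$ related by the Feigin--Frenkel duality relation $\frac{1}{k+h^\vee}+\frac{1}{\ell+h^\vee}=1$, a conformal embedding
$$V^{k}(\g)\* \W^{\ell}(\g) \hookrightarrow V^{k-1}(\g)\* L_1(\g),$$
where $L_1(\g)$ carries the Urod conformal vector and the two factors on the left form a dual pair in the right. Specializing to $k=k(m)$ (so that $k-1=k(m-1)$) and $\ell=\ell(m)$ produces the universal analogue of the desired embedding, with central charges that match precisely by virtue of the Feigin--Frenkel relation.

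Next, I would apply the minimal quantum Hamiltonian reduction functor $H^0_{DS,f_\theta}(-)$ to the first tensor factor on each side. This functor sends $V^k(\g)$ to $\W^k(\g,f_\theta)$ and is exact on the appropriate module categories \cite{Ara09b}. Since the BRST complex acts only on the first factor and commutes with tensoring by vertex algebras not entering the reduction, the universal embedding is preserved, yielding
$$\W^{k(m)}(\g,f_\theta)\* \W^{\ell(m)}(\g) \hookrightarrow \W^{k(m-1)}(\g,f_\theta)\* L_1(\g).$$
I would then pass to simple quotients. Since $L_1(\g)$ is already simple and $\W_{k(m-1)}(\g,f_\theta)$ is lisse by \cite{AM15}, the RHS has a well-defined simple quotient. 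The image of the maximal proper ideal of the LHS under the composition of the universal embedding with the surjection onto this simple quotient must vanish, producing the sought embedding of simple W-algebras. The dual pair assertion is then inherited from the Urod construction, in which the two factors are mutual commutants by design, together with the observation that this commutant structure is preserved under $H^0_{DS,f_\theta}$ since the reduction acts only on the complementary factor.

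The main obstacle is the descent to simple quotients. One must verify that the maximal proper ideals of both $\W^{k(m)}(\g,f_\theta)$ and $\W^{\ell(m)}(\g)$ lie in the kernel of the surjection onto the simple quotient on the right; equivalently, neither factor acquires any new relations upon embedding. For the minimal factor this should follow from the lisseness results of \cite{AM15} applied at level $k(m-1)$ together with the control of singular vectors at level $k(m)$; for the principal factor one can invoke Feigin--Frenkel duality to transfer the question to the better-understood structure of $\W^\ell(\g)$ at its Feigin--Frenkel dual. A direct verification of the mutual commutant property (not merely inclusion) in the simple quotient may additionally require a character or Zhu-algebra computation to match graded dimensions at low conformal weight; the lisseness of the enveloping algebra $\W_{k(m-1)}(\g,f_\theta)\* L_1(\g)$ ensures that such computations are finite-dimensional in each weight.
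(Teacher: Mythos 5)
Your outline assembles the easy ingredients---the universal coset embedding $V^{k}(\g)\*\W^{\ell}(\g)\hookrightarrow V^{k-1}(\g)\*L_1(\g)$ (which is \Cref{theorem:ACL}, due to \cite{ACL17}; the Urod theorem of \cite{AraCreFei22} is the separate statement $H^0_{DS,f}(V\* L_n(\g))\cong H^0_{DS,f}(V)\* L_n(\g)$) and the exactness of $H^0_{DS,f_\theta}$---but the step you yourself flag as ``the main obstacle'' is the entire content of the theorem, and your proposal gives no argument for it. It is simply not automatic that the maximal proper ideal of $\W^{k(m)}(\g,f_\theta)\*\W^{\ell(m)}(\g)$ dies under the composite with the surjection onto $\W_{k(m-1)}(\g,f_\theta)\*L_1(\g)$: a priori the image is some quotient of the universal tensor product, and identifying it with $\W_{k(m)}(\g,f_\theta)\*\W_{\ell(m)}(\g)$ requires representation-theoretic hypotheses that your argument never invokes (and which fail outside the range $m\geq 0$, $k(m)<0$, so a hypothesis-free descent cannot be correct). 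The paper also reverses your order of operations: it first proves the embedding at the affine level, $L_k(\g)\*\W_\ell(\g)\hookrightarrow L_{k-1}(\g)\*L_1(\g)$ (\Cref{thm:simplicity-of-cosets}, \Cref{theorem:coset-is-simple-1}, \Cref{th:simplicity-of-L}), using projectivity of $V^k(\g)$ in $\on{KL}_k$, the non-degenerate invariant form, the Grothendieck-group decomposition of \Cref{prop:comp-factor1}, and the linkage condition that $\chi_{\lam}\sim_\ell\chi_0$ with $\lam\in P_+\cap Q$ forces $h_{\lam}\geq 0$ with equality only at $\lam=0$; only afterwards does it apply the exact functor $H^0_{DS,f_\theta}$ (with $k\notin\Z_{\geq 0}$, so that $H^0_{DS,f_\theta}(L_k(\g))\cong\W_k(\g,f_\theta)$ by \cite{Ara05}) and the Urod isomorphism to land in $\W_{k-1}(\g,f_\theta)\*L_1(\g)$.

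The dual pair claim is likewise not ``inherited by design'': commutants do not pass to simple quotients for free. In \Cref{th:simplicity-of-W} the identification $\on{Com}(\W_k(\g,f_\theta),\W_{k-1}(\g,f_\theta)\*L_1(\g))\cong\W_\ell(\g)$ requires showing that the generalized $L_0^{min}$-eigenspace of eigenvalue $0$ coincides with the honest eigenspace and is a quotient of $\W^\ell(\g)$, which rests on the positivity $h_{\lam}^{min}\geq 0$ coming from the extra hypothesis $2\rho-(k+h^{\vee})\theta\in\sum_{\alpha\in\Delta_+}\R_{\geq 0}\alpha$, and then on the non-degenerate invariant form on $\W_{k-1}(\g,f_\theta)\*L_1(\g)$ from \cite{AEkeren19} to force simplicity of that subspace; the other commutant is computed through the $[\chi_0]$-block analysis of \Cref{th:simplicity-of-L}. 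Finally, your proposal contains no verification that these hypotheses actually hold at $k=k(m)$ for $\g=d_4,e_6,e_7,e_8$; in the paper this is a genuine case-by-case computation with the integral root system attached to $\chi_0$ at level $\ell(m)$ (Tables \ref{table:D4}--\ref{table:E8}), via \cite{KacKaz79}, and it is exactly this computation that makes the restriction on $m$ visible. Your suggested substitutes (``control of singular vectors'', a ``Zhu-algebra computation'') are not developed and do not obviously close either gap.
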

Here vertex subalgebras $W_1$ and $W_2$ of a vertex algebra $V$ are said to form a dual pair
if $\on{Com}(W_1,V)=W_2$ and $\on{Com}(W_2,V)=W_1$,
where 
$$\on{Com}(W,V)=\{w\in V\mid [v_{(m)},w_{(n)}]=0\text{ for all }m,n\in \Z,\ v\in V\}.$$

The following assertion is widely believed.
\begin{conjecture}\label{conj:coset}
Let $V$ be a vertex algebra,
$W$ a vertex subalgebra of $V$.
Suppose that $V$ and $W$ are finitely strongly generated, conformal, rational and lisse.
Then $\on{Com}(W,V)$ is also rational and lisse.
\end{conjecture}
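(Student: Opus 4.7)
The plan is to follow the standard route through the representation category of $W$ and the coset decomposition of $V$. After adjusting the conformal vector on $V$ if necessary (so that $W$ becomes a conformal subalgebra and $W \otimes C$ embeds as a conformal vertex subalgebra), Huang's modularity theorem implies that $\Mod{W}$ is a semisimple modular tensor category. Since $V$ is a finitely strongly generated, lower-bounded conformal $W$-module and $W$ is rational, $V$ is completely reducible as a $W$-module, giving a Howe-type decomposition
\begin{equation*}
V \;=\; \bigoplus_{M \in \on{Irr}(W)} M \otimes V_M, \qquad V_M := \Hom_W(M,V).
\end{equation*}
The commutant $C := \on{Com}(W,V)$ is then canonically identified with $V_W$, the multiplicity space of the trivial $W$-module, and each $V_M$ is naturally a $C$-module. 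The task is therefore to show that $C$ is lisse and rational.

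A preparatory step is to verify that this sum is finite, i.e.\ only finitely many $V_M$ are nonzero. This should follow from finite-dimensionality of the Zhu algebra $A(V)$ together with the natural ring homomorphism $A(W) \to A(V)$: each nonzero isotypic component of $V$ contributes nontrivially to $A(V)$, so finiteness of $\on{Irr}(W)$ combined with $\dim A(V)<\infty$ constrains the decomposition to have finite support.

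The main obstacle, and the reason the conjecture remains open in general, is establishing $C_2$-cofiniteness of $C$. The strategy is to combine the finite Howe decomposition with an Abe--Buhl--Dong / Miyamoto-style generating argument: if $V$ is a finite extension of $W \otimes C$ by modules of the form $M \otimes V_M$ and $W$ is lisse, one controls a set of strong generators of $V$ and their Poisson brackets modulo $C_2(V)$ in order to force $C$ to lie inside a finite-dimensional quotient $C/C_2(C)$. The hard input is to show that each $V_M$ is strongly finitely generated as a $C$-module, equivalent to an appropriate Noetherian property for $V$ over $W \otimes C$; circumventing this obstruction in full generality, rather than in case-by-case orbifold or coset settings, is the crux of the problem.

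Once lisseness of $C$ is in place, rationality follows by standard tensor-category arguments. A double-commutant computation inside $V$, using rigidity in $\Mod{W}$ and simplicity of $V$, shows that each nonzero $V_M$ is a simple $C$-module, and the subcategory of $\Mod{C}$ generated by the $\{V_M\}$ is braided-equivalent to the opposite of a fusion subcategory of $\Mod{W}$, hence semisimple. Combined with $C_2$-cofiniteness, Huang's theorem that a $C_2$-cofinite \voa with a semisimple module category is rational then yields the result, provided one verifies that \emph{every} simple $C$-module occurs as some $V_M$. This last surjectivity statement is the remaining delicate point and is handled by applying the double-commutant principle to the induced $V$-module $V \boxtimes_C N$ for any putative simple $C$-module $N$, using rationality of $V$ to decompose it back into the list $\{M \otimes V_M\}$.
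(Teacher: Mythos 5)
The statement you are trying to prove is \Cref{conj:coset}, which the paper explicitly leaves open: it is presented as a ``widely believed'' conjecture with no proof, and it is used only as a hypothesis (together with \Cref{sec:dualsimple} and \Cref{conj:minimal}) to motivate \Cref{conj:principal}; the one case the paper actually establishes (\Cref{theorem m=0}) is obtained by a completely different route, namely identifying $\W_{\ell(0)}(\g)$ with $\on{Vir}_{c_{2,5}}$ through central charge and asymptotic growth comparisons in \Cref{th:new-rational}, thereby bypassing the conjecture entirely. So there is no paper proof to compare against, and your proposal cannot be accepted as one either: you yourself concede that the central step, $C_2$-cofiniteness of $C=\on{Com}(W,V)$, ``remains open in general,'' and the Abe--Buhl--Dong/Miyamoto-style generating argument you sketch hinges on each multiplicity space $V_M$ being strongly finitely generated over $C$ (a Noetherian-type property of $V$ over $W\otimes C$) for which no general mechanism is known. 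Naming the obstruction is not the same as circumventing it; as written, the lisseness half of the conjecture is simply assumed.

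The rationality half also has genuine gaps even if lisseness were granted. Your double-commutant argument presupposes that $W$ and $C$ form a dual pair in $V$, i.e.\ $\on{Com}(C,V)=W$; this is false in general and is exactly the kind of statement the paper must prove by hand in its concrete situations (see \Cref{sec:dualsimple} and \Cref{th:simplicity-of-W}, whose proofs require delicate linkage and conformal-weight estimates). The claimed surjectivity ``every simple $C$-module occurs as some $V_M$'' is unsubstantiated: for a $C_2$-cofinite algebra one must control all (possibly logarithmic) modules, the induced module $V\boxtimes_C N$ only makes sense once a vertex tensor category structure on $C$-modules is available (which presently requires the very lisseness and rationality being proved), and the criterion you attribute to Huang needs semisimplicity of the full module category, not merely of the subcategory generated by the $\{V_M\}$. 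Finally, the finiteness of the Howe-type decomposition does not follow just from $\dim A(V)<\infty$ and a map $A(W)\to A(V)$, since isotypic components of $V$ in higher conformal weight need not be detected by Zhu algebra considerations alone; one needs an ordinariness/grading argument. In short, the proposal is a reasonable survey of why the conjecture is plausible, but each of its load-bearing steps is either open or unjustified, which is consistent with the paper's decision to state it as a conjecture.
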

\Cref{sec:dualsimple},
\Cref{conj:coset}
and  \Cref{conj:minimal}
implies:
\begin{conjecture}\label{conj:principal}
Let $\g$ be 
a simple Lie algebra of type $d_4$, $e_6$, $e_7$, $e_8$,
and let $m$ be a non-negative integer.
The simple principal $W$-algebra $\W_{\ell(m)}(\g)$ is rational and lisse
if $k(m)<0$.
\end{conjecture}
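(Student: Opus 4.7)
The plan is to derive \Cref{conj:principal} from \Cref{sec:dualsimple} together with the two assumed inputs \Cref{conj:minimal} and \Cref{conj:coset}; no extra new ingredient is needed beyond assembling these. First I would fix $\g$ of type $d_4$, $e_6$, $e_7$, or $e_8$ and $m\geq 0$ with $k(m)<0$, and record the numerical observation that $k(m-1)=k(m)-1<k(m)<0$, so both $m$ and $m-1$ fall inside the range where \Cref{conj:minimal} predicts rationality and lisseness of $\W_{k(m)}(\g,f_{\theta})$ and $\W_{k(m-1)}(\g,f_{\theta})$. The boundary case $m=0$ causes no issue, since $\W_{k(-1)}(\g,f_{\theta})=\C$ by \cite{AM15} and this is trivially rational and lisse.

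Next I would verify that the ambient algebra $V:=\W_{k(m-1)}(\g,f_{\theta})\otimes L_1(\g)$ appearing in \Cref{sec:dualsimple} satisfies all the hypotheses of \Cref{conj:coset}: the first tensor factor is strongly finitely generated, conformal, rational and lisse by \Cref{conj:minimal}, while $L_1(\g)$ is an integrable level-one affine vertex algebra, hence rational and lisse by classical results. These properties are preserved under tensor product, and switching to the Urod conformal vector on $L_1(\g)$ only alters the conformal structure, not rationality or lisseness. The subalgebra $\W_{k(m)}(\g,f_{\theta})\hookrightarrow V$ produced by \Cref{sec:dualsimple} is likewise rational and lisse by \Cref{conj:minimal}.

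Applying \Cref{conj:coset} to this inclusion then yields that $\on{Com}(\W_{k(m)}(\g,f_{\theta}),V)$ is rational and lisse. Finally, the dual-pair clause of \Cref{sec:dualsimple} identifies this commutant with $\W_{\ell(m)}(\g)$, and the conclusion of \Cref{conj:principal} follows. A small sanity check I would run is that the embedding of \Cref{sec:dualsimple} really lands in the simple quotients on both sides of the dual pair rather than in universal W-algebras; this is already built into its phrasing, so no additional work is required.

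The real obstacle is of course that neither \Cref{conj:minimal} nor \Cref{conj:coset} is established in the generality needed, so any unconditional proof of \Cref{conj:principal} must attack one or both of these. In particular, the hardest part is expected to be \Cref{conj:minimal} itself, since establishing rationality of $\W_{k(m)}(\g,f_{\theta})$ for $m\geq 1$ at negative integer levels (where no admissibility machinery applies) is precisely the outstanding issue, and only partial evidence in the style of \cite{Sun} is currently available. If \Cref{conj:minimal} is proven for a specific $(\g,m)$, then the pair $(\g,m)$ can be plugged straight into the commutant argument above to give the corresponding instance of \Cref{conj:principal}, modulo a version of \Cref{conj:coset} adequate for that situation.
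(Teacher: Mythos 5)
Your proposal is correct and is exactly the derivation the paper intends: the statement is deduced from \Cref{sec:dualsimple} together with \Cref{conj:minimal} (covering both $m$ and $m-1$, with $\W_{k(-1)}(\g,f_\theta)=\C$ handling the boundary) and \Cref{conj:coset}, applied to the dual pair inside $\W_{k(m-1)}(\g,f_\theta)\otimes L_1(\g)$. The paper states this implication in one line without spelling out the hypothesis checks, so your write-up matches its approach, just in more detail.
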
 
Note that all the levels $\ell$ appearing in  \Cref{conj:principal} (2)
are not admissible for $\widehat{\g}$,
see Table \ref{table: ell(0)}.
\begin{table}[hbtp]
  \caption{The level of  principal W-algebras $\W_{\ell(m)}(\g)$ appearing in  \Cref{conj:principal}}
  \label{table: ell(0)}
%  \centering
  \begin{tabular}{c|c|c|c|c}
      & $d_4$ &  $e_6$ &$e_7$& $e_8$ \\
    \hline 
    $\ell(0)+h^{\vee}$ & $5/4$  & $10/9$& $15/14$     &$25/24$\\
     $\ell(1)+h^{\vee}$ &  & $11/10$& $16/15$     &$26/25$\\
       $\ell(2)+h^{\vee}$ &  & & $17/16$     &$27/26$\\
        $\ell(3)+h^{\vee}$ &  & &    &$28/27$\\
          $\ell(4)+h^{\vee}$ &  & &      &$29/28$
  \end{tabular}
 \end{table}
 
 \begin{theorem}\label{theorem m=0}
 \Cref{conj:principal} is true for $m=0$.
 \end{theorem}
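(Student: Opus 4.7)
The plan is to specialize \Cref{sec:dualsimple} to $m=0$ and invoke a coset rationality result. Since $\W_{k(-1)}(\g,f_\theta)=\C$ by \cite{AM15}, setting $m=0$ in \Cref{sec:dualsimple} collapses one factor of the target and yields a conformal embedding
\begin{equation*}
\W_{k(0)}(\g,f_\theta)\otimes \W_{\ell(0)}(\g)\hookrightarrow L_1(\g),
\end{equation*}
in which the two factors form a dual pair.

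The regularity inputs are known. First, $L_1(\g)$ is rational and $C_2$-cofinite for every simple $\g$. Second, the minimal W-algebra $\W_{k(0)}(\g,f_\theta)=\W_{-h^\vee/6}(\g,f_\theta)$ is rational and $C_2$-cofinite by \cite{Kawa15}. Using the dual pair property, I would decompose the ambient algebra as
\begin{equation*}
L_1(\g)=\bigoplus_{i} M_i\otimes N_i,
\end{equation*}
where the $M_i$ are pairwise non-isomorphic irreducible $\W_{k(0)}(\g,f_\theta)$-modules, and the multiplicity spaces $N_i$ are, by the double commutant property of a dual pair, pairwise non-isomorphic irreducible $\W_{\ell(0)}(\g)$-modules, with vacuum component $N_0=\W_{\ell(0)}(\g)$. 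The finiteness of this decomposition combined with the $C_2$-cofiniteness of $L_1(\g)$ and $\W_{k(0)}(\g,f_\theta)$ forces lisseness of $\W_{\ell(0)}(\g)$ by a standard Zhu $C_2$-filtration argument.

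The main obstacle is rationality, which is exactly the instance of \Cref{conj:coset} relevant here. To close the argument I would invoke the coset/commutant rationality theorems in the braided tensor-category framework for vertex operator algebras (as developed by Creutzig--Kanade--McRae and McRae): the hypotheses required are that the ambient VOA $L_1(\g)$ be regular and that the subVOA $\W_{k(0)}(\g,f_\theta)$ be regular with a modular tensor category of modules, both of which hold in our situation thanks to \cite{Kawa15}. The substantive technical point is therefore to check that the embedding data fits cleanly into the existing extension machinery; this is guaranteed by the dual pair property and by the smallness of the extension $L_1(\g)\supset \W_{k(0)}(\g,f_\theta)\otimes \W_{\ell(0)}(\g)$, and is where the bulk of the care in the proof should go.
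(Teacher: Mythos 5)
There is a genuine gap: both of your closing steps silently assume exactly the open statement the paper is at pains to avoid, namely \Cref{conj:coset}. Your lisseness step (``the finiteness of this decomposition combined with the $C_2$-cofiniteness of $L_1(\g)$ and $\W_{k(0)}(\g,f_\theta)$ forces lisseness of $\W_{\ell(0)}(\g)$ by a standard Zhu $C_2$-filtration argument'') is not a standard argument; no theorem is known that a commutant of a lisse rational subalgebra inside a lisse rational vertex algebra is lisse. Likewise, the rationality step you invoke does not exist in the generality you state: the Huang--Kirillov--Lepowsky, Creutzig--Kanade--McRae and McRae extension/coset machinery requires, among other things, prior knowledge that the commutant is $C_2$-cofinite (or that its module category already carries vertex tensor structure), which is precisely what is unproven here. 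Hypotheses of the form ``$L_1(\g)$ regular and $\W_{k(0)}(\g,f_\theta)$ regular with modular tensor category'' are not sufficient. A secondary issue is that the dual-pair property of \Cref{sec:dualsimple} gives mutual commutants, but it does not by itself yield the Howe-type decomposition $L_1(\g)=\bigoplus_i M_i\otimes N_i$ with the multiplicity spaces $N_i$ \emph{irreducible} and pairwise non-isomorphic over $\W_{\ell(0)}(\g)$; that refinement needs a separate argument.

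The paper closes the gap differently, by identifying the coset explicitly rather than appealing to any general coset theorem. One computes that the central charge of $\W_{\ell(0)}(\g)$ is $-22/5=c_{2,5}$, so there is a conformal map $\varphi:\on{Vir}^{c_{2,5}}\to\W_{\ell(0)}(\g)$, and the only question is whether $\on{im}\varphi$ is the universal or the simple Virasoro algebra. Combining the embedding $\W_{k(0)}(\g,f_\theta)\otimes\W_{\ell(0)}(\g)\hookrightarrow L_1(\g)$ (with the Urod conformal vector) with the conformal embedding $\on{Vir}_{c_{3,5}}\otimes L_1(\g)^{\s[t]}\hookrightarrow\W_{k(0)}(\g,f_\theta)$ from \cite{Kawa15} and the fact that $L_1(\mathfrak{sl}_2)$ with the Urod conformal vector extends $\on{Vir}_{c_{2,5}}\otimes\on{Vir}_{c_{3,5}}$, a comparison of asymptotic growths of characters shows that $\on{im}\varphi\cong\on{Vir}^{c_{2,5}}$ (growth $1$) is impossible, so $\on{im}\varphi\cong\on{Vir}_{c_{2,5}}$; since $\on{Vir}_{c_{2,5}}$ is its only simple module with integral conformal weight, one concludes $\W_{\ell(0)}(\g)\cong\on{Vir}_{c_{2,5}}$, which is rational and lisse. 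If you want to salvage your outline, you would need to replace the appeal to general coset rationality/lisseness by an identification of this kind (or some other unconditional argument), since \Cref{conj:coset} cannot be cited as a theorem.
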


\Cref{theorem m=0}
was conjectured by \cite{LiXieYan23},
see the first row of Table 11 in loc.cit and use the Feigin-Frenkel duality.

\section{Proof of  \Cref{th:new-rational}.}
\label{sec:deligne1}
Let $V^k(\g)$ be the universal affine vertex algebra associated to $\g$ at level $k$,
$L_k(\g)$ the unique simple graded quotient of $V^k(\g)$.

%The simple affine vertex algebra at level $k$ is written as $L_k(\g)$.

The central charge of the principal W-algebra $\W^k(\g)$
is given by
\begin{align}\label{eq:cc-of-principal-W}
-\on{rk}(\g)\frac{
((h+1)(k+h^{\vee})-h^{\vee})(r^{\vee}{}^Lh^{\vee}(k+h^{\vee})-(h+1))}
{k+h^{\vee},}
\end{align}
where  $\on{rk}(\g)$ is the rank of $\g$,
$h$ is the Coxeter number of $\g$,
%$h^{\vee}$ is the dual Coxeter number,
${}^L\g^{\vee}$ is the dual Coxeter number of the Langlands dual of
$\g$,
and $r^{\vee}$ is the lacing number of $\g$.

%In this section, we prove   \Cref{th:new-rational}.: 
We set
$$
\g=d_4,e_6,e_7,e_8,\quad k=k(0)=-h^\vee/6,\quad \ell=\ell(0)=\frac { 5h^\vee } { 5h^\vee-6 }-h^\vee.
$$
Then, $k=-1,-2,-3,-5$ and $\ell+h^\vee=5/4,10/9,15/14,25/24$, respectively.

Let $\on{Vir}^c$ be the universal Virasoro vertex algebra at central charge $c$,
$\on{Vir}_{c}$ the unique simple quotient of $\on{Vir}^c$.
It is known \cite{Wan93} that $\on{Vir}_{c}$ is rational and lisse if and only if 
\begin{align*}
c_{p,q}=1-\frac{6(p-q)^2}{pq}
\end{align*}
for  $p,q\in \Z_{\geq 2}$,
$(p,q)=1$.
%If $k+2=p/q$ is an admissible level for $\widehat{\mf{sl}}_2$, 

It is clear that the following assertion
proves
 \Cref{conj:principal} for $m=0$.

\begin{theorem}\label{th:new-rational}
Let $\g$ be 
a simple Lie algebra of type $d_4$, $e_6$, $e_7$, $e_8$.
%The principal $W$-algebra
%$\W_{\ell(0)}(\g)$
%is rational and lisse.
We have
\begin{align*}
\W_{\ell(0)}(\g)\cong \on{Vir}_{c_{2.5}}.
\end{align*}
%where $\on{Vir}_{p,q}$ is the $(p,q)$-minimal series Virasoro vertex algebra of central charge   
%the $(2,5)$-minimal series Virasoro vertex algebra of central charge   $c_{p,q}=1-\frac{6(p-q)^2}{pq}$.
\end{theorem}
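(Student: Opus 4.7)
My plan is to apply \Cref{sec:dualsimple} at $m=0$ and combine it with Kawasetsu's rationality theorem. Since $\W_{k(-1)}(\g,f_\theta)=\C$ by \cite{AM15}, the embedding of \Cref{sec:dualsimple} collapses to
\[
\W_{k(0)}(\g,f_\theta)\otimes\W_{\ell(0)}(\g)\hookrightarrow L_1(\g),
\]
with the two factors forming a dual pair in $L_1(\g)$ (equipped with its Urod conformal vector). Consequently
\[
\W_{\ell(0)}(\g)\cong\on{Com}\!\bigl(\W_{k(0)}(\g,f_\theta),\,L_1(\g)\bigr),
\]
and the embedded factor $\W_{k(0)}(\g,f_\theta)$ is rational and lisse by \cite{Kawa15}, which is what gives me traction on this commutant.

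As a preliminary sanity check I would verify that the central charge already matches $c_{2,5}=-22/5$. Specialising \eqref{eq:cc-of-principal-W} to the simply-laced setting ($r^\vee=1$ and ${}^Lh^\vee=h^\vee=h$) at $\ell(0)+h^\vee=5h/(5h-6)$, a short simplification produces
\[
c\bigl(\W^{\ell(0)}(\g)\bigr)=-\on{rk}(\g)\cdot\frac{11(h+6)}{5(5h-6)},
\]
which evaluates to $-22/5$ for each of $(h,\on{rk}(\g))=(6,4),(12,6),(18,7),(30,8)$, as required.

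To identify the commutant with $\on{Vir}_{c_{2,5}}$, the principal conformal vector of $\W^{\ell(0)}(\g)$ yields a Virasoro morphism $\on{Vir}^{c_{2,5}}\to\W_{\ell(0)}(\g)$, and I would show it factors through an injection of the simple quotient by matching graded characters. Using the branching
\[
L_1(\g)=\bigoplus_i M_i\otimes N_i
\]
into the finitely many simple $\W_{k(0)}(\g,f_\theta)$-modules $M_i$ classified in \cite{Kawa15}, together with the known lattice-theta characters of $L_1(\g)$ and Kawasetsu's character formulas for the $M_i$, one extracts the vacuum multiplicity $N_0\cong\W_{\ell(0)}(\g)$ and recognises it as the Rogers--Ramanujan series that is the vacuum character of the $(2,5)$ minimal model. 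Simplicity of both sides then promotes the Virasoro injection to an isomorphism.

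The principal obstacle is precisely this character computation. The case $\g=e_8$ is the most transparent: $L_1(e_8)$ is holomorphic, so its branching under the dual pair is severely constrained and must match the two-module structure of the Lee--Yang model. For $\g=d_4,e_6,e_7$ the simple-current structure of the lattice VOA $L_1(\g)$ enriches the branching, but it is still controlled by the module data of \cite{Kawa15}. An alternative would be to establish strong generation of $\W_{\ell(0)}(\g)$ by its conformal vector via low-weight dimension counts, bypassing the full branching at the cost of comparable module-theoretic input.
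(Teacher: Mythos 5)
Your reduction to the commutant via \Cref{sec:dualsimple} at $m=0$, the central charge computation (which is correct), and the final observation that the only simple $\on{Vir}_{c_{2,5}}$-module with integral conformal weight is the vacuum module are all sound and parallel to what the paper does. The genuine gap is the middle step: you never establish that the vacuum multiplicity space in the branching of $L_1(\g)$ has the Rogers--Ramanujan character, and this is not a routine verification. The multiplicity spaces $N_i$ are modules over $\on{Com}(\W_{k(0)}(\g,f_\theta),L_1(\g))=\W_{\ell(0)}(\g)$, whose characters are precisely the unknown quantity: the level $\ell(0)$ is non-admissible, so no character formula for $\W_{\ell(0)}(\g)$ or its modules is available, and extracting the $\ch N_i$ from the single identity $\ch L_1(\g)=\sum_i \ch M_i\,\ch N_i$ requires extra input (refined characters, modular data, or an explicit decomposition) that you do not supply. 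The appeal to holomorphy of $L_1(e_8)$ does not help: holomorphy constrains extensions, not the branching under a dual pair, and a priori you do not even know the commutant is rational, so you cannot assume the decomposition is governed by finitely many modules of a known model. For the same reason the phrase ``show it factors through an injection of the simple quotient by matching graded characters'' is circular as stated, and the fallback of proving strong generation by ``low-weight dimension counts'' runs into the identical problem: the low-weight graded dimensions of $\W_{\ell(0)}(\g)$ are exactly what is not known.

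It is worth seeing how the paper sidesteps this. Assuming the image of $\on{Vir}^{c_{2,5}}\to\W_{\ell(0)}(\g)$ were the universal (length-two) algebra, it derives a contradiction purely from asymptotic growths: it combines (i) the conformal embedding $\on{Vir}_{c_{3,5}}\otimes L_1(\g)^{\s[t]}\hookrightarrow\W_{k(0)}(\g,f_\theta)$ of \cite{Kawa15}, (ii) the fact that $L_1(\mathfrak{sl}_2)$ with the Urod conformal vector is a conformal extension of $\on{Vir}_{c_{2,5}}\otimes\on{Vir}_{c_{3,5}}$ \cite{BerFeiLit16}, and (iii) additivity of growths for rational, lisse pieces \cite{AraEkeMor23}, to conclude $\mathbf{g}_{L_1(\g)}=\mathbf{g}_{\W_{k(0)}(\g,f_\theta)}+\mathbf{g}_{\on{Vir}_{c_{2,5}}}$, which contradicts $\mathbf{g}_{\on{Vir}^{c_{2,5}}}=1>1-\tfrac{6}{10}=\mathbf{g}_{\on{Vir}_{c_{2,5}}}$. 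Thus only leading character asymptotics of rational, lisse vertex algebras are needed---data that is actually available---whereas your route demands exact $q$-series identities that are not; if you wish to pursue it, you would have to carry out the full branching in detail (in the spirit of Sun's character-level analysis for $m=1$, which, as the paper notes, by itself only gives statements at the level of characters).
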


%\begin{theorem}\label{thm:deligne1}
%We have
%\[
% \W_\ell(\g)\cong \cM(2,5).
%\]
%In particular, $\W_\ell(\g)$ is lisse, rational.
%\end{theorem}
%The rest of this section
%is devoted to the proof of Theorem \ref{thm:deligne1}.

\begin{proof}%[Proof of Theorem Theorem  \ref{th:new-rational}]
%By \eqref{eq:cc-of-principal-W},
The central charge of $\W_{\ell}(\g)$ is $-22/5$, which coincides with that of $\on{Vir}_{c_{2,5}}$.
% $\mc{M}(2,5)$.
Thus we have a conformal vertex algebra homomorphism
$$\varphi:\on{Vir}^{c_{2,5}}\rightarrow \W_{\ell}(\g).$$
%where $\on{Vir}^{p,q}$  is the universal Virasoro vertex algebra at central charge $c_{p,q}$.

We wish to show that $\on{im}\varphi \cong \on{Vir}_{c_{2,5}}$.
Since $\on{Vir}^{c_{2,5}}$ has length two,
%By the representation theory of the Virasoro algebra,
we have either $\on{im}\varphi \cong \on{Vir}^{c_{2,5}}$
or $\on{im}\varphi \cong \on{Vir}_{c_{2,5}}$.
So we suppose that $\on{im}\varphi \cong \on{Vir}^{c_{2,5}}$
and obtain a contradiction.

By Theorem \ref{sec:dualsimple}
and the fact that $\W_{k-1}(\g,f_\theta)\cong \C$ (\cite{AM15}),
we have the embedding of vertex algebras
\begin{align}\label{eq:m=0embedding}
\W_{k}(\g,f_{\theta})\*\W_{\ell}(\g)\hookrightarrow % \W_{k-1}(\g,f_{\theta})\* L_1(\g)\cong 
L_1(\g),
\end{align}
which is  conformal 
when $L_1(\g)$ is equipped with the Urod conformal vector.
By \cite{Kawa15}, 
one knows that the simple minimal W-algebra
$\W_{k}(\g,f_\theta)$ is lisse and rational.
In particular,
it admits an asymptotic datum \cite{AraEkeMor23}.
 Since changing the conformal vector to the Urod conformal vector does not affect the asymptotic behavior of the character,
 we obtain that
 \begin{align}\label{eq:growth0}
 \mathbf{g}_{L_1(\g)}\geq \mathbf{g}_{\W_k(\g,f_{\theta})}+\mathbf{g}_{\on{Vir}^{c_{2,5}}}=\mathbf{g}_{\W_k(\g,f_{\theta})}+1,
 \end{align}
 where $\mathbf{g}_V$ denotes
 the asymptotic growth \cite{AraEkeMor23} of $V$.
Here we have used the fact 
that %the asymptotic growth of the universal Virasoro vertex algebra  is one.
%the universal Virasoro vertex algebra $\on{Vir}^c$ admits an asymptotic datum and 
$\mathbf{g}_{\on{Vir}^{c}}=1$.

By \cite{Kawa15}, 
one knows that
there is an conformal embedding
$$\on{Vir}_{c_{3,5}}\otimes L_1(\g)^{\s[t]}\hookrightarrow \W_k(\g,f_{\theta}),$$
where 
$\s$ is the $\mathfrak{sl}_2$-triple  $\langle e_\theta,\theta,f_\theta\rangle\subset \g$
associated with  the highest root $\theta$ of $\g$
and $L_1(\g)^{\s[t]}=\{v\in L_1(\g)\mid xt^n v=0\ \forall x\in \mf{s}, \ n\geq 0\}=\comm{L_1(\s)}{L_1(\g)}$.
It was shown in  \cite{Kawa15} that $L_1(\g)^{\s[t]}$ is rational and lisse by
determining it explicitly.
%   the vertex algebra 
%$L_1(\g)^{\s[t]}$ is determined in  \cite{Kawa15}.
%In particular
It follows that
the vertex algebras 
$\on{Vir}_{c_{3,5}}\otimes L_1(\g)^{\s[t]}\subset  \W_k(\g,f_{\theta})$ and
 $L_1(\mf{s})\* L_1(\g)^{\s[t]} \subset L_1(\g)$ are rational, and satisfy the assumptions 
 in \cite[Proposition2.4]{AraEkeMor23}.
% 
% in Proposition \ref{Prop:AvEM}.
Therefore, we have
\begin{align}
\mathbf{g}_{\on{Vir}_{c_{3,5}}}+\mathbf{g}_{L_1(\g)^{\s[t]}}=\mathbf{g}_{\W_k(\g,f_{\theta})}, \label{eq:growth1}\\
\mathbf{g}_{L_1(\mf{s})}+\mathbf{g}_{L_1(\g)^{\s[t]}}=\mathbf{g}_{L_1(\g)}.
\label{eq:growth2}
\end{align}
On the other hand,
one knows  form \cite{BerFeiLit16} that
$L_1(\mf{s})=L_1(\mf{sl}_2)$ with a Urod conformal vector is a conformal extension of $\on{Vir}_{c_{2,5}}\otimes \on{Vir}_{c_{3,5}}$.
%which also
%satisfies the assumptions in  \cite[Proposition2.4]{AvEM}.
Hence we have
\begin{align}
\mathbf{g}_{\on{Vir}_{c_{2,5}}}+\mathbf{g}_{\on{Vir}_{c_{3,5}}}=\mathbf{g}_{L_1(\mf{s})}.
\label{eq:growth3}
\end{align}
From
\eqref{eq:growth1},
\eqref{eq:growth2},
and \eqref{eq:growth3},
we obtain that
\begin{align*}
\mathbf{g}_{L_1(\g)}=\mathbf{g}_{\W_k(\g,f_{\theta})}+\mathbf{g}_{\on{Vir}_{c_{2,5}}}.
\end{align*}
But then \eqref{eq:growth0}
implies that 
$1\leq \mathbf{g}_{\on{Vir}_{c_{2,5}}}=1-\frac{6}{2\cdot 5}$, which is a contradiction.

We have shown that $\on{im} \varphi\cong \on{Vir}_{c_{2,5}}$, and thus,
$\W_{\ell}(\g)$ decomposes into a direct sum of simple $ \on{Vir}_{c_{2,5}}$-modules.
However, $ \on{Vir}_{c_{2,5}}$ is the only simple module over itself that has an integral conformal dimension.
Therefore,
we conclude that $\W_{\ell}(\g)\cong  \on{Vir}_{c_{2,5}}$ as required.
\end{proof}
\begin{remark}
Let us consider $\g=a_1, a_2$ %\mathfrak{sl}_2,\mathfrak{sl}_3$ 
with $\ell+h^\vee=5h^\vee/(5h^\vee-6)=5/2,5/3$, $\g=g_2$ with $\ell+4=7/15$ or $5/7$ 
and $\g=f_4$ with $\ell+9=13/20$ or $10/13$.
Then we can also prove $\W_\ell(\g)\cong  \on{Vir}_{c_{2,5}}$ by using growths of characters since in these cases, $\W_\ell(\g)$ are rational minimal models 
(\cite{A2012Dec})
and their characters are already known.
We thus observe a uniform phenomenon that $\W_\ell(\g)\cong  \on{Vir}_{c_{2,5}}$ for
 the Deligne exceptional series.
% 
% {\em Deligne exceptional series}
%(\cite{D})
%$$
%%\mathfrak{sl}_2 \subset \mathfrak{sl}_3
%a_1\subset a_2\subset g_2\subset d_4\subset f_4\subset e_6\subset e_7\subset e_8.
%$$
\end{remark}

\section{Linkage of principal  W-algebras}
\label{section:Linkage}
For a weight $\lam$ of $\affg$,
let $M(\lam)$ be the Verma module of $\affg$ if highest weight $\lam$,
$L(\lam)$ the unique simple quotient of $M(\lam)$.
Let $[M(\lam): L(\mu)]$ be the multiplicity of $L(\lam)$ in the local composition factor of $M(\lam)$.
By \cite{KacKaz79},
$[M(\lam):L(\mu)]\ne 0$
if and only if
$\mu \preceq \lam$,
that is,
 there exists a sequence $\{\beta_i\}$ of  positive roots  of $\affg$,
a sequence $\{n_i\}$ of  positive integers 
and a sequence of weights $\{\lam_i\}$ such that $\lam_0=\lam$,
$\lam_r=\mu$,
$\lam_i=\lam_{i-1}-n_i \beta_i$,
$2(\beta_i,\lam_{i-1}+\hat{\rho})= n_i(\beta_i,\beta_i)$.

Let us describe the linkage of principal W-algebras
obtained in \cite{Ara07},
see also \cite{Dhi21}.
Recall that the Zhu algebra $\on{Zhu}(\W^k(\g))$
is isomorphic
to the center $\mc{Z}(\g)$ of $U(\g)$ (\cite{Ara07}).
For $\lam\in \h^*$
\begin{align*}
\chi_{\lam}:\on{Zhu}(\W^k(\g))=\mc{Z}(\g)\ra \C
\end{align*}
be  the  central character as in \cite[(27)]{ACL17},
so that
\begin{align}
\chi_{\lam}=\chi_{\mu}\iff 
\mu\in W\circ_k \lam,
\end{align}
where
$W$ is a Weyl group of $\g$
and
\begin{align}
w\circ_k \lam=w(\lam+\rho-(k+h^{\vee})\rho^{\vee})-(\rho-(k+h^{\vee})\rho^{\vee})
\end{align}
for $w\in W$.
Here,
$\rho=1/2\sum_{\alpha\in \Delta_+}\alpha$,
$\rho^{\vee}=1/2\sum_{\alpha\in \Delta_+}\alpha^{\vee}$,
and $\Delta_+$ is the set of positive roots of $\g$.

Let $\mathbf{M}_k(\chi_{\lam})$
be the Verma module 
of $\W^k(\g)$ with highest weight $\chi_{\lam}$,
$\mathbf{L}_k(\chi_{\lam})$ the unique simple graded quotient of $\mathbf{M}_k(\chi_{\lam})$,
see \cite{Ara07} for the definition.
We have
$\W_k(\g)\cong \mathbf{L}_k(\chi_0)$.

Let $k\ne -h^{\vee}$,
so that $\W^k(\g)$ is conformal.
Let $\omega_W$ be the conformal vector of $\W^{\ell}(\g)$
and set $L^W(z)=\sum_{n\in \Z}L_n^Wz^{-n-2}=Y(\omega_W,z)$.
Let $M_{d}^{gen}$ and $M_{d}$ be
the generalized $L_0^W$-eigenspace 
and the  $L_0^W$-eigenspace of eigenvalue $d$
of a $\W^{\ell}(\g)$-module $M$, respectively.

We have
$\mathbf{M}_{k}(\chi_{\lam})=\bigoplus_{d\in h_{\lam}+\Z_{\geq 0}}\mathbf{M}_{k}(\chi_{\lam})_{d}$
and
\begin{align*}
\on{tr}_{\mathbf{M}_{k}(\chi_{\lam})}q^{L_0^W}=\frac{q^{h_{\lam}}}{\prod_{j\geq 1}(1-q^j)^{\on{rk}\g}},
\end{align*}
%$\dim \mathbf{M}_{k}(\chi_{\lam})_{d}<\infty$,
%$\dim \mathbf{M}_{k}(\chi_{\lam})_{h_{\lam}}=1$,
where
\begin{align}
h_{\lam}=\Delta_{\lam}-(\lam|\rho^{\vee}),
\quad 
\Delta_\lam=\frac{(\lam|\lam+2\rho)}{2(k+h^{\vee})}.
\end{align}

Let $\mc{O}(\W^k(\g))$ be the full subcategory
of the category of
$\W^k(\g)$-modules consisting of modules $M$
such that $M=\bigoplus_{d}M_d^{gen}$,
%where
%$M_d^{gen}$ is the generalized $L_0$-eigenspace of eigenvalue $d$,
$\dim M_d^{gen}<\infty$ for all $d$,
and
there exists finitely many $d_1,\dots, d_r\in \C$
such that $M_d^{gen}=0$ unless $d\in \bigcup_{i}d_i+\Z_{\geq 0}$.
Both $\mathbf{M}_k(\chi_{\lam})$
and $\mathbf{L}_k(\chi_{\lam})$
are  objects of $\mc{O}(\W^k(\g))$,
and $\{\mathbf{L}_k(\chi_{\lam})\mid \lam\in \h^*/W\circ_k \}$ gives a complete representative
of simple objects of the abelian category $\mc{O}(\W^k(\g))$.
Let 
$[\mathbf{M}_k(\chi_{\lam}):\mathbf{L}_k(\chi_{\mu})]$
be 
the multiplicity
of $\mathbf{L}_k(\chi_{\mu})$
in the local composition factor of $\mathbf{M}_k(\chi_{\lam})$.

We define a partial ordring
$\preceq_{k} $ on the set 
$ \h^*/W\circ_k=\on{Specm}(\on{Zhu}(\W^k(\g)))$
by setting
$\chi_{\mu} \preceq_{k} \chi_{\lam}$ 
if and only if
there exist $\lam'\in W\circ_k \lam$,
$\mu'\in W\circ_k \mu$
such that
$\mu'$ is anti-dominat
and
\begin{align*}
\mu'-(k+h^{\vee})\rho^{\vee}+k\Lam_0-\Delta_{\mu'-(k+h^{\vee})\rho^{\vee}}\delta\preceq \lam'-(k+h^{\vee})\rho^{\vee}+k\Lam_0
-\Delta_{\lam'-(k+h^{\vee})\rho^{\vee}}\delta.
\end{align*}
Note that
\begin{align}
h_{\lam}=\Delta_{\lam-(k+h^{\vee})\rho^{\vee}}+(\rho|\rho^{\vee})-\frac{k+h^{\vee}}{2}|\rho^{\vee}|^2.
\label{eq:conformal-dim-comparison}
\end{align}

The following assertion is a direct consequence of \cite[Main Theorem 1]{Ara07}
and the linkage principal of $\affg$-modules \cite{KacKaz79}.
\begin{theorem}
For  $\lam,\mu\in \h^*$
the following conditions are equivalent.
\begin{enumerate}
\item 
$[\mathbf{M}_k(\chi_{\lam}):\mathbf{L}_k(\chi_{\mu})]\ne 0$;
\item 
$\chi_{\mu}\preceq_k \chi_\lam$.\end{enumerate}
\end{theorem}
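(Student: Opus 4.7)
The plan is to deduce the theorem by transporting the Kac--Kazhdan linkage principle for $\affg$ through the quantum Drinfeld--Sokolov reduction functor $H^0_{DS}$. The key input is \cite[Main Theorem 1]{Ara07}: on the category $\mc{O}$ of $\affg$-modules at level $k$, the reduction $H^0_{DS}$ is exact with vanishing higher cohomology, and, writing $\hat{\lam} := \lam - (k+h^{\vee})\rho^{\vee} + k\Lam_0$, one has
\begin{equation*}
H^0_{DS}(M(\hat{\lam})) \cong \mathbf{M}_k(\chi_\lam), \qquad
H^0_{DS}(L(\hat{\lam})) \cong
\begin{cases}
\mathbf{L}_k(\chi_\lam) & \text{if $\lam$ is anti-dominant,} \\
0 & \text{otherwise.}
\end{cases}
\end{equation*}

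First I would pick an anti-dominant representative $\mu' \in W\circ_k \mu$ and any $\lam' \in W\circ_k \lam$. Applying $H^0_{DS}$ to a local composition series of the $\affg$-Verma module $M(\hat{\lam'})$, exactness collapses the multiplicity to
\begin{equation*}
[\mathbf{M}_k(\chi_\lam) : \mathbf{L}_k(\chi_\mu)] \;=\; \sum_{\nu} [M(\hat{\lam'}) : L(\hat{\nu})],
\end{equation*}
where $\nu$ ranges over the anti-dominant elements of the orbit $W\circ_k \mu$ --- these being the only $\affg$-simples within the orbit whose reduction contributes to $\mathbf{L}_k(\chi_\mu)$, since the rest are killed by $H^0_{DS}$. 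Among such representatives the central character $\chi$ pins down the orbit uniquely, so the sum degenerates to the single term $\nu = \mu'$.

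By the Kac--Kazhdan linkage principle \cite{KacKaz79}, this remaining term is non-zero if and only if $\hat{\mu'} \preceq \hat{\lam'}$, which is precisely the condition defining $\chi_\mu \preceq_k \chi_\lam$ (the compatibility with the conformal grading encoded in \eqref{eq:conformal-dim-comparison} ensures that the shift by $(k+h^\vee)\rho^\vee$ in the definition of $\preceq_k$ matches the shift appearing in $\hat{\lam}$). The equivalence $(1)\iff(2)$ follows. The main technical point --- and the only real obstacle --- is the foundational input from \cite{Ara07}: the exactness of $H^0_{DS}$ on $\mc{O}$ and the precise identification of the images of Vermas and simples. Once that machinery is granted, the argument is a formal bookkeeping through anti-dominant orbit representatives.
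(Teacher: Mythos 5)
Your proposal is correct and is essentially the paper's own argument: the paper deduces the theorem directly from Main Theorem 1 of Arakawa's representation theory of W-algebras (exactness of $H^0_{DS}$ on category $\mc{O}$, Vermas mapping to W-algebra Vermas, and simples mapping to $\mathbf{L}_k(\chi_\nu)$ or to zero according to anti-dominance) combined with the Kac--Kazhdan linkage principle, exactly as you do. The only cosmetic imprecision is that the multiplicity is in general a sum over all anti-dominant representatives of $W\circ_k\mu$ and their $\delta$-shifts rather than the single term $\nu=\mu'$, but since the theorem only asserts an equivalence of non-vanishing, this does not affect your conclusion.
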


Define  an equivalence relation $\sim_k$ in $\lam\in \h^*/W\circ$
by setting
$\chi_{\lam}\sim \chi_{\lam'}$ if there exists $\mu\in \h^*$
such that 
$\chi_{\lam}, \chi_{\lam' }\preceq_k \chi_{\mu} $
or 
$\chi_{\lam}, \chi_{\lam' }\succeq_k \chi_{\mu} $.

For $\lam\in \h^*/W\circ$,
let $\mc{O}(\W^k(\g))^{[\chi_{\lam}]}$ be the block
corresponding to $[\chi_{\lam}]$,
that is the full suncategory of
$\mc{O}(\W^k(\g))$ consisting of objects
$M$ such that $[M: \mathbf{L}_k(\chi_{\mu})]= 0$
unless $\chi_{\mu}\sim \chi_{\lam}$.
Then
we have
$\mc{O}(\W^k(\g))=\bigoplus_{[\chi_{\lam}]} \mc{O}(\W^k(\g))^{[\chi_{\lam}]}$.
For $M\in \mc{O}(\W^k(\g))$,
let $M=\bigoplus M^{[\chi_\lam]}$ be the corresponding decomposition.

%\smallskip 
%In this paper we only consider the case
%$k+h^{\vee}\not\in \Q_{\leq 0}$,
%so that
%$\chi_{\lam}\sim \chi_{\mu}$.
%if and only if 
%\begin{align*}
%\mu-(k+h^{\vee})\rho^{\vee}+k\Lam_0=w \circ 
%(\mu-(k+h^{\vee})\rho^{\vee}+k\Lam_0),
%\end{align*}
%for some $w=W(\mu-(k+h^{\vee})\rho^{\vee}+k\Lam_0)$

\section{Proof of \Cref{sec:dualsimple} }
\label{secproofthm21}
%For
%a vertex algebra $V$,
%let $L(V)=V\otimes \C[t,t^{-1}]/(T\* 1+1\* \partial_t)(V\otimes \C[t,t^{-1}])$ be the 
%associated 
%Borcheds Lie algebra with the Lie bracket
%$$[a_{\{m\}}, b_{\{n\}}]=\sum_{j\geq 0}\begin{pmatrix}
%m\\ j
%\end{pmatrix}(a_{(j)}b)_{\{m+n-j\}},$$
%where $a_{\{m\}}$ denotes the image of $a\* t^m$ in $L(V)$.
%If $V$ is conformal,
%there exists an anti-Lie algebra automorphism
%$\theta: L(V)\ra L(V)$
%defined by
%$$\theta(a)_{\{n\}}=(-1)^{\Delta_a}(e^{L_1}a)_{\{-n\}}$$
%for a homogenous element $a$ of conformal weight $\Delta_a$.
\begin{theorem}\label{thm:simplicity-of-cosets}
Let $V=\bigoplus_{d\geq 0}V_d$ be a non-negatively graded conformal vertex algebra 
with $V_0=\C$, $\dim V_d<\infty$ for all $d$,
equipped with a non-degenerate 
symmetric invariant bilinear form  $(~|~)$ in the sense of \cite{FHL93}.
Suppose that 
$V$ is equipped with a compatible $\affg$-structure \cite[7.1.3]{FreBen04}
with level $k\not\in -h^{\vee}-\Q_{\leq 0}$,
that is,
there exists a vertex algebra homomorphism
$\varphi:V^k(\g)\ra V$ such that
\begin{enumerate}
\item $\varphi(x_{(-1)}|0\rangle)$ is primary of conformal weight $1$ for all   $x\in \g$;
\item  $k+h^{\vee}\not\in \Q_{\leq 0}$.
\end{enumerate}
Then the commutant $\on{Com}(\varphi(V^k(\g),V))=V^{\g[t]}$ is simple.
\end{theorem}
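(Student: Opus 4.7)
My plan is to argue by contradiction using the $V^k(\g) \otimes C$-module decomposition of $V$, where $C := V^{\g[t]}$, combined with the non-degenerate bilinear form. Preliminarily, $C$ is a $\Z_{\geq 0}$-graded vertex subalgebra with $C_0 = \C|0\rangle$, inheriting $(\cdot|\cdot)$ as an invariant form; the primary-weight-one condition on $\varphi(x_{(-1)}|0\rangle)$ ensures that the coset Virasoro element is defined, so $C$ is conformal. The condition $k+h^\vee \notin \Q_{\leq 0}$ places $k$ in the generic regime for $\affg$: Verma modules $M^k(\mu)$ are irreducible, have non-degenerate Shapovalov form, and category $\mathcal{O}$ is semisimple on the weights appearing.

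The central step is to establish a decomposition $V \cong \bigoplus_\mu M^k(\mu) \otimes N_\mu$ as a $V^k(\g) \otimes C$-module, where $\{N_\mu\}$ are multiplicity spaces carrying $C$-module structure, with $N_0 \supseteq C|0\rangle$. The invariant form on $V$ then pairs $M^k(\mu) \otimes N_\mu$ with its contragredient via the Shapovalov form tensored with an induced form on $N_\mu$. Non-degeneracy of the Shapovalov form forces the induced pairings on each $N_\mu$ to be non-degenerate --- in particular, on $C \subseteq N_0$. Now suppose, for contradiction, that $J \subsetneq C$ is a non-zero proper graded ideal. Then $W := \bigoplus_\mu M^k(\mu) \otimes (J \cdot N_\mu) \subsetneq V$ is a $V^k(\g) \otimes C$-submodule, and orthogonality under the descended pairing produces a complementary submodule $W'$ so that $V = W \oplus W'$. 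The vacuum $|0\rangle$ lies in exactly one summand, but since it generates $V$ as a $V^k(\g) \otimes C$-module (by the decomposition with irreducibility of factors and cyclicity), the other summand must vanish --- contradicting $J \neq 0$.

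The main obstacle is to rigorously justify the module decomposition at this generic level, and to verify that $|0\rangle$ generates $V$ over $V^k(\g) \otimes C$. The condition $k+h^\vee \notin \Q_{\leq 0}$ enters crucially here: at degenerate levels, Verma modules of $\affg$ acquire singular vectors, breaking both the decomposition into irreducibles and the non-degeneracy of the induced pairings on multiplicity spaces. A secondary technical point is that, strictly speaking, $V$ need only be a limit of such decomposable pieces --- one filters by conformal weight and exploits the finite-dimensional graded pieces of $V$ --- but the argument goes through level by level, and the key input is always the fact that the Shapovalov form on each $M^k(\mu)$ is non-degenerate at a generic level.
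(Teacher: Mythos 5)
There is a genuine gap, and it is the central step. Your argument rests on the claim that the hypothesis $k+h^{\vee}\not\in\Q_{\leq 0}$ puts $k$ in a ``generic regime'' where Verma modules are irreducible, their Shapovalov forms are non-degenerate, and the relevant category is semisimple. That is false: the hypothesis allows every positive rational level, in particular positive integral and admissible levels, where Verma (and Weyl) modules are reducible and the Kazhdan--Lusztig category $\on{KL}_k$ is far from semisimple. The theorem is in fact applied in the paper to $V=L_{k-1}(\g)\* L_1(\g)$ with the diagonal $\affg$-action at exactly such non-generic levels, so a proof that only works for generic $k$ cannot suffice. Two further problems: since $V$ lies in $\on{KL}_k$ (locally finite, $\g$-integrable action), the modules that could occur are Weyl modules $V^k(\lam)$ rather than Verma modules $M^k(\mu)$; and your closing step assumes that $|0\rangle$ generates $V$ as a $V^k(\g)\otimes C$-module, which is not true in general (in the applications $V$ contains many components attached to $\lam\neq 0$), so even granting the decomposition the contradiction does not follow as stated. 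Note also that if you had shown the induced invariant form on $C$ to be non-degenerate, simplicity would follow at once, since the radical of an invariant form is an ideal and $C_0=\C$; the detour through $W\oplus W'$ and vacuum-cyclicity is both unnecessary and unjustified.

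What the paper does instead, and what your proposal is missing, is an argument valid at every level with $k+h^{\vee}\not\in\Q_{\leq 0}$ and requiring no semisimplicity: in $\on{KL}_k$ the module $V^k(\g)$ is projective and $\on{Hom}_{\affg}(V^k(\g),M)=M_{[0]}=M_{[0]}^{gen}$, the $L_0^{\g}$-eigenspace of eigenvalue $0$ (this is precisely where the level condition enters, as it forces the conformal weights of the other simple objects away from $0$; see the lemma of Arakawa--Creutzig--Linshaw quoted in the paper). Hence $\on{Com}(\varphi(V^k(\g)),V)=V^{\g[t]}\cong\on{Hom}_{\affg}(V^k(\g),V)=V_{[0]}$. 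Since $L_0^{\g}$ is self-adjoint for $(~|~)$, distinct eigenspaces are orthogonal and the form restricts non-degenerately to $V_{[0]}$; your preliminary observation that $\omega-\omega_{\g}$ is a conformal vector for the coset, so that the restricted form is invariant, then gives simplicity. So the setup in your first paragraph is sound, but the decomposition-into-irreducibles step must be replaced by this projectivity/eigenspace identification.
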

\begin{proof}
Set $U=\on{Com}(\varphi(V^k(\g),V))$
and
 $L_n=\omega_{(n+1)}$,
where $\omega$ is the conformal vector of $V$.
By the assumption $(2)$,
$V^k(\g)$ is conformal with the Sugawara conformal vector $\omega_{\g}$.
Set $L_n^\g=\varphi(\omega_{\g})_{(n+1)}$.
By the assumption $(1)$,
$\omega_{\g}\in V_2$
and $L_1 \varphi(\omega_\g)=0$.
Hence
by 
\cite[Theorem 3.11.12 and Remark 3.11.13]{LepLi04},
$L_n=L_n^\g$ on $\varphi(V^k(\g))$
and $L_n=\omega''_{(n+1)}$ on $U$,
where $\omega''=\omega-\omega^\g$  is the conformal vector of $U$.
It follows that 
the form $(~|~)$ restricts to a symmetric invariant bilinear form of  $U$.

Now since $V$ is
non-negatively graded and each $V_d$ is finite-dimensional,
 $V$ belongs to $\on{KL}_k$
as a $\affg$-module.
Also,
by the assumption (2),
the $\affg$-module
$V^k(\g)$ is projective in $\on{KL}_k$
and we have
\begin{align*}
\on{Hom}_{\widehat{\g}}(V^k(\g),M)=M_{[0]}=M_{[0]}^{gen}
\end{align*}
for $M\in \on{KL}_k$,
where 
$M_{[\lam]}$  and
$M_{[\lam]}^{gen}$  are the $L_0^\g$-eigenspace 
and the 
$L_0^\g$-generalized eigenspace
of $M$ of eigenvalue $\lam$,
respectively,
see e.g.
\cite[Lemma 2.1]{ACL17}.
Therefore  
$U=V^\g\cong \on{Hom}_{\widehat{\g}}(V^k(\g), V)$
equals to $V_{[0]}\subset V$.
%since 
%
%=V^{\g[t]}\cong \on{Hom}_{\widehat{\g}}(V^k(\g), V)
%\cong 
%V_0.$$
Since $(L_0^\g v,w)=(v, L_0^\g w)$,
the restriction of the form $(~|~)$ to
$U=V_{[0]}%0=\on{Com}(\varphi(V^k(\g),V))
$ is
 non-generate.
This implies that  $U$
 is simple as required.

\end{proof}

\begin{theorem}[\cite{ACL17}]
\label{theorem:ACL}
Let $\g$ be simply-laced.
Assume that $k+h^{\vee}-1\not\in \Q_{\leq 0}$,
and
define $\ell$
by the formula
\begin{equation*}%\label{eqn:duallevel}
\frac{1}{k+h^\vee}+\frac{1}{\ell+h^\vee}=1.
\end{equation*}
 Then we have a vertex algebra isomorphism
\begin{align}
\W^{\ell}(\g)\cong \on{Com}(V^k(\g),V^{k-1}(\g)\* L_1(\g))=(V^{k-1}(\g)\* L_1(\g))^{\g[t]}.
\end{align}
Hence we have a 
 conformal embedding
\begin{align}
V^k(\g)\otimes \W^\ell(\g)\hookrightarrow 
V^{k-1}(\g)\otimes L_1(\g).
\label{eq:ACL}
\end{align}
\end{theorem}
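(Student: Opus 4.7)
The plan is to construct a vertex algebra homomorphism $\varphi:\W^\ell(\g)\to U$, where $U:=(V^{k-1}(\g)\otimes L_1(\g))^{\g[t]}$, and then show it is an isomorphism. First I would check that the Sugawara central charges satisfy $c(V^{k-1}(\g))+c(L_1(\g))-c(V^k(\g))=c(\W^\ell(\g))$; this is an elementary identity once one substitutes $\frac{1}{k+h^\vee}+\frac{1}{\ell+h^\vee}=1$. The hypothesis $k+h^\vee-1\not\in\Q_{\leq 0}$, which is equivalent to $\ell+h^\vee\not\in\Q_{\leq 0}$, guarantees, via the argument of \Cref{thm:simplicity-of-cosets}, that $U$ is a well-defined conformal vertex subalgebra and that $V^{k-1}(\g)$ is projective in $\on{KL}_{k-1}$, so that the coset is realized as the diagonal $\g[t]$-invariants $(V^{k-1}(\g)\otimes L_1(\g))_{[0]}$.

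To produce $\varphi$ I would pass to a free-field realization. Embed $V^{k-1}(\g)$ into a Wakimoto module built from a $\beta\gamma$-system and an $\on{rk}(\g)$-dimensional Heisenberg vertex algebra $\pi^{k-1}$, and embed $L_1(\g)$ into the root lattice vertex algebra $V_Q$ (possible because $\g$ is simply-laced), whose Cartan part is a rank $\on{rk}(\g)$ Heisenberg $\pi^1$. Inside $\pi^{k-1}\otimes\pi^1$ the diagonal Cartan sits along one Heisenberg direction; changing basis orthogonally yields a splitting $\pi^{k}\otimes\pi^{\ell}$, where the second factor has level conjugate to $k$ exactly by the duality $\frac{1}{k+h^\vee}+\frac{1}{\ell+h^\vee}=1$. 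The residual Chevalley generators of $\g[t]$ become screening currents acting on $\pi^\ell$, and by Feigin-Frenkel the joint kernel of these screenings is precisely $\W^\ell(\g)$. This produces $\varphi$.

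To establish that $\varphi$ is an isomorphism I would compare graded characters. Projectivity of $V^{k-1}(\g)$ in $\on{KL}_{k-1}$ gives a branching decomposition $V^{k-1}(\g)\otimes L_1(\g)\cong \bigoplus_\lambda V^k_\lambda\otimes N_\lambda$ as $\widehat{\g}$-modules, and the argument of \Cref{thm:simplicity-of-cosets} identifies $U=N_0$. The character of $N_0$ can be read off from the branching rules and matches the Feigin-Frenkel character $\prod_{j\geq 1}(1-q^j)^{-\on{rk}(\g)}$ (up to the overall conformal shift) of $\W^\ell(\g)$. Since $\W^\ell(\g)$ is freely strongly generated by primaries of conformal weights $d_i+1$ (the exponents of $\g$ shifted by one), matching characters forces $\varphi$ to be both injective and surjective, yielding the asserted isomorphism and, as an immediate corollary, the conformal embedding \eqref{eq:ACL}.

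The main obstacle is the precise free-field matching: one must verify that in the Wakimoto/lattice model the diagonal $\widehat{\g}$-action splits off a Heisenberg of level $\ell$ with the correct Killing-form normalization, and that the residual screenings coincide on the nose (not merely up to scalar or up to a duality twist) with the Feigin-Frenkel screenings defining $\W^\ell(\g)$. The normalization of the bilinear form and the passage from $(k+h^\vee)$ to the additive dual $1/(k+h^\vee)+1/(\ell+h^\vee)=1$ (rather than the Feigin-Frenkel multiplicative duality) is where the hypothesis $k+h^\vee-1\not\in\Q_{\leq 0}$ is used; once these identifications are performed, the character argument above finishes the proof.
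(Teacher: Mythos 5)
You should first note that the paper contains no proof of \Cref{theorem:ACL}: it is quoted from \cite{ACL17}, so your proposal has to be measured against that proof. Your outline --- Wakimoto realization of $V^{k-1}(\g)$, the identification $L_1(\g)\cong V_Q$ for simply-laced $\g$, splitting off a dual Heisenberg, and identifying the coset with a joint kernel of screening operators which Feigin--Frenkel theory recognizes as $\W^\ell(\g)$ --- is indeed the strategy of \cite{ACL17} at generic level, so the plan is sound in outline. But the step you yourself defer as the ``main obstacle'' is the theorem: one must show that the relevant intersection of kernels of screenings inside the ambient free-field algebra actually lies in $V^{k-1}(\g)\otimes L_1(\g)$, commutes with the diagonal $\g[t]$, exhausts the commutant, and coincides on the nose with the Feigin--Frenkel realization of $\W^\ell(\g)$. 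Deferring this leaves no proof, only a restatement of the difficulty.

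The supporting arguments also have concrete errors. The character of $\W^\ell(\g)$ is not $\prod_{j\geq 1}(1-q^j)^{-\on{rk}(\g)}$ (that is the character of a rank-$\on{rk}(\g)$ Heisenberg algebra, equivalently of a Verma module $\mathbf{M}_\ell(\chi_0)$); since $\W^\ell(\g)$ is freely strongly generated in weights $d_i+1$, its character is $\prod_{i}\prod_{n\geq d_i+1}(1-q^n)^{-1}$. More seriously, computing the character of the coset $N_0$ requires the branching decomposition of $V^{k-1}(\g)\otimes L_1(\g)$ under the diagonal $V^k(\g)$, i.e.\ essentially \Cref{prop:comp-factor1} (Main Theorem 3 of \cite{ACL17}), which is established together with, not prior to, the coset isomorphism --- as written your argument is circular, and even granting the branching one still has to prove the resulting $q$-series identity. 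Moreover, in the stated range $k+h^\vee-1\not\in\Q_{\leq 0}$ the level $k$ need not be generic, $\on{KL}_k$ need not be semisimple, and a direct-sum branching decomposition is not available; in \cite{ACL17} the passage from generic $k$ is handled by separate arguments (deformable families, strong-generation/limit considerations) that your sketch does not supply. Finally, the hypothesis is misplaced: it has nothing to do with normalizing the bilinear form in the free-field dictionary (the relation $1/(k+h^\vee)+1/(\ell+h^\vee)=1$ is simply the definition of $\ell$); it is what guarantees the categorical identifications ($V^k(\g)$ projective in $\on{KL}_k$, the coset equal to the $L_0^{\g}$-weight-zero part, as in \Cref{thm:simplicity-of-cosets}) and the specialization from generic level, and it is not equivalent to $\ell+h^\vee\not\in\Q_{\leq 0}$ (e.g.\ $k+h^\vee$ a negative rational gives $\ell+h^\vee>0$ while $k+h^\vee-1\in\Q_{\leq 0}$).
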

 
The following assertion  gives a generalization of
\cite[Main Theorem 1]{ACL17}.
\begin{theorem}\label{theorem:coset-is-simple-1}
Let $\g$ be simply-laced and
suppose that $k+h^{\vee}-1\not\in \Q_{\leq 0}$.
Then
\begin{align*}
\W_{\ell}(\g)\cong (L_{k-1}(\g)\* L_1(\g))^{\g[t]}.
\end{align*}
In particular,
\eqref{eq:ACL} induces
  a vertex algebra homomorphism
\begin{align}
V^k(\g)\otimes \W_\ell(\g)\rightarrow 
L_{k-1}(\g)\otimes L_1(\g).
\label{eq:ACL2}
\end{align}
\end{theorem}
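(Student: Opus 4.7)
The plan is to apply the simplicity criterion Theorem~\ref{thm:simplicity-of-cosets} to $V = L_{k-1}(\g)\otimes L_1(\g)$ and combine it with the universal identification of Theorem~\ref{theorem:ACL} via the canonical projection.

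First, I would verify the hypotheses of Theorem~\ref{thm:simplicity-of-cosets} for $V = L_{k-1}(\g)\otimes L_1(\g)$ equipped with the sum of Sugawara conformal vectors and the diagonal embedding $\varphi : V^k(\g) \to V$, $x \mapsto x\otimes 1 + 1\otimes x$, which has the correct level $(k-1)+1 = k$. The tensor product is non-negatively graded with $V_0 = \C$ and finite-dimensional graded pieces. It carries a non-degenerate invariant bilinear form because the simple affine vertex algebras $L_{k-1}(\g)$ and $L_1(\g)$ are both self-contragredient (for simply-laced $\g$, $L_1(\g)$ is moreover a rational lattice VOA). The diagonal currents are primary of conformal weight one, and since $k+h^\vee - 1 \notin \Q_{\leq 0}$ implies $k+h^\vee \notin \Q_{\leq 0}$, Theorem~\ref{thm:simplicity-of-cosets} yields that $(L_{k-1}(\g)\otimes L_1(\g))^{\g[t]}$ is simple.

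Second, I would apply the coset functor to the canonical surjection $\pi : V^{k-1}(\g)\otimes L_1(\g) \twoheadrightarrow L_{k-1}(\g)\otimes L_1(\g)$, which is a morphism in $\on{KL}_k$ under the diagonal action. Since $k+h^\vee \notin \Q_{\leq 0}$, the module $V^k(\g)$ is projective in $\on{KL}_k$, so the functor $\on{Hom}_{\widehat{\g}}(V^k(\g),-)$ is exact on this category; as in the proof of Theorem~\ref{thm:simplicity-of-cosets}, it coincides with $(-)^{\g[t]}$ on the objects at hand. Therefore $\pi$ induces a surjective vertex algebra homomorphism
\[ \bar\pi : (V^{k-1}(\g)\otimes L_1(\g))^{\g[t]} \twoheadrightarrow (L_{k-1}(\g)\otimes L_1(\g))^{\g[t]}. \]
By Theorem~\ref{theorem:ACL}, the source is $\W^\ell(\g)$, while by the first step the target is a simple nonzero vertex algebra. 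Hence $\bar\pi$ factors through the unique simple graded quotient $\W_\ell(\g)$ of $\W^\ell(\g)$, giving the claimed isomorphism $\W_\ell(\g)\cong (L_{k-1}(\g)\otimes L_1(\g))^{\g[t]}$.

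Finally, the homomorphism \eqref{eq:ACL2} is obtained by composing the conformal embedding \eqref{eq:ACL} with $\pi$: the image of the $\W^\ell(\g)$-factor lands inside the coset and thus descends to the simple quotient $\W_\ell(\g)$, while the $V^k(\g)$-factor maps diagonally. The main obstacle is establishing surjectivity of $\bar\pi$ on $\g[t]$-invariants, which is exactly where the projectivity of $V^k(\g)$ in $\on{KL}_k$ is essential; the remaining steps are formal.
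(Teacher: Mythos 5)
Your proposal is correct and follows essentially the same route as the paper: simplicity of the coset via Theorem~\ref{thm:simplicity-of-cosets} applied to the simple algebra $L_{k-1}(\g)\otimes L_1(\g)$ with its non-degenerate invariant form, and surjectivity of the induced map from $\W^\ell(\g)\cong(V^{k-1}(\g)\otimes L_1(\g))^{\g[t]}$ via projectivity of $V^k(\g)$ in $\on{KL}_k$, forcing the coset to be the simple quotient $\W_\ell(\g)$. No gaps.
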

\begin{proof}
First,
 $L_{k-1}(\g)\* L_1(\g)$ is a simple conical conformal vertex algebra
 with conformal vector $\omega=\omega^\g\*1 +1\* \omega^\g$.
 Since  $\omega_{(2)}$ annihilates the weight one space
of $L_{k-1}(\g)\* L_1(\g)$,
$L_{k-1}(\g)\* L_1(\g)$ is equipped with a non-degenerate invariant symmetric bilinear form (\cite{Li94}).
Hence we immediately obtain from 
\Cref{thm:simplicity-of-cosets}
 that
the vertex algebra 
$(L_{k-1}(\g)\* L_1(\g))^{\g[t]}=\on{Com}(\varphi(V^k(\g)),L_{k-1}(\g)\* L_1(\g))$
is simple,
where $\varphi:V^k(\g)\ra L_{k-1}(\g)\* L_1(\g)$ is induced by the diagonal action of $\affg$ on $L_{k-1}(\g)\* L_1(\g)$.

%Recall that    the $\affg$-module
%$V^k(\g)$ is projective in $\on{KL}_k$
%and we have
%\begin{align*}
%\on{Hom}_{\widehat{\g}}(V^k(\g),M)=M_0=M_0^{gen}
%\end{align*}
%for $M\in \on{KL}_k$,
%where 
%$M_\lam$  and
%$M_\lam^{gen}$  are the $L_0$-eigenspace 
%and the 
%$L_0$-generalized eigenspace
%of $M$ of eigenvalue $\lam$,
%respectively,
%see
%\cite[Lemma 2.1]{ACL}.

On the other hand,
we have
by  \Cref{theorem:ACL}
that
$\W^{\ell}(\g)\cong 
(V^{k-1}(\g)\* L_1(\g))^{\g[t]}\cong \on{Hom}_{\widehat{\g}}(V^k(\g), V^{k-1}(\g)\* L_1(\g))
$.
The projectively of $V^k(\g)$ (see the proof of \Cref{thm:simplicity-of-cosets})
implies that 
the surjection 
$V^{k-1}(\g)\* L_1(\g)\ra L_{k-1}(\g)\* L_1(\g)$
 gives rise to the surjection
$\W^{\ell}(\g)\twoheadrightarrow\on{Hom}_{\widehat{\g}}(V^k(\g), L_{k-1}(\g)\* L_1(\g))
\cong  (L_{k-1}(\g)\* L_1(\g))^{\g[t]}
$.
It follows that
$ (L_{k-1}(\g)\* L_1(\g))^{\g[t]}$ is  isomorphic to the simple quotient
$\W_{\ell}(\g)$ of $\W^{\ell}(\g)$.

%
%Set $V=(L_{k-1}(\g)\* L_1(\g))^{\g[t]}=\on{Hom}_{\widehat{\g}}(V^k(\g), L_{k-1}(\g)\* L_1(\g))
%=
%(L_{k-1}(\g)\* L_1(\g))_0$.
%Also, 
%In particular,
%the vertex algebra $V$ is a quotient of $\W^{\ell}(\g)$.
%Now the vertex algebra $L_{k-1}(\g)\* L_1(\g)$ is equipped with
%a non-generate symmetric invariant bilinear form,
%as $L_{k-1}(\g)$ and $L_1(\g)$  are simple.
%Its restriction to the vertex subalgebra
%$V=(L_{k-1}(\g)\* L_1(\g))_0$ is also non-generate,
%and hence, $V$ is simple.
\end{proof}

Let
$T_{\lam,0}^{\ell}=H_{DS,f_{prin}}^0(V^k(\lam))$,
where $V^k(\lam)$
is the Weyl module of $\affg$ with highest weight $\lam+k\Lam_0$
and $H_{DS,f}^\bullet(?)$ is the quantized Drinfeld-Sokolov reduction associated with $f$.
%Since $T^{\ell}_{\lam,0}$ is a quotient of $\mathbf{M}(\chi_{\lam})$,
We have 
$T^{\ell}_{\lam,0}=\bigoplus_{d\in h_{\lam}+\Z_{\geq 0}}(T^{\ell}_{\lam,0})_{[d]_W}$,
and 
  $[T^{\ell}_{\lam,0}: \mathbf{L}(\chi_{\mu})]\ne 0$
only if $[\mathbf{M}(\chi_{\lam}): \mathbf{L}(\chi_{\mu})]\ne 0$,
or equivalently,
 $\chi_{\mu}\preceq_{\ell} \chi_\lam$,
 see  \cite{AraFre19}.

By \cite[Main Theorem 3]{ACL17} and its proof,
we have the following assertion.
\begin{proposition}\label{prop:comp-factor1}
Suppose that $k+h^{\vee}-1\not\in \Q_{\leq 0}$.
We have
\begin{align*}
[V^{k-1}(\g)\otimes L_1(\g)]=\sum_{\lam \in P_+\cap Q}[V^k(\lam)\* T_{\lam,0}^{\ell}]
\end{align*}
in the Grothendieck group $K_0(V^k(\g)\otimes \W^\ell(\g)\on{-mod})$
of the category $V^k(\g)\otimes \W^\ell(\g)\on{-mod}$ of $V^k(\g)\otimes \W^\ell(\g)$-modules.
\end{proposition}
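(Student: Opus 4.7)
The strategy is to deduce the identity from \cite[Main Theorem 3]{ACL17}, whose proof provides exactly the Grothendieck-group decomposition we need. I sketch how its ingredients assemble to yield the stated identity.

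First, I would regard $V^{k-1}(\g)\otimes L_1(\g)$ as an object of $\on{KL}_k$ via the diagonal action of $\widehat{\g}$ (the levels $k-1$ and $1$ add to $k$). The hypothesis $k+h^\vee-1\notin \Q_{\leq 0}$ implies $k+h^\vee\notin \Q_{\leq 0}$, so that $V^k(\g)$ is projective in $\on{KL}_k$ as invoked in the proof of \Cref{thm:simplicity-of-cosets}, and $\on{Hom}_{\widehat{\g}}(V^k(\lam),M)$ computes the $\g[t]$-primitive weight-$\lam$ subspace of any $M\in \on{KL}_k$. Consequently, in the Grothendieck group $K_0(\on{KL}_k)$, one obtains a decomposition of $V^{k-1}(\g)\otimes L_1(\g)$ into Weyl modules $V^k(\lam)$, the multiplicity spaces inheriting an action of $\W^\ell(\g)\cong (V^{k-1}(\g)\otimes L_1(\g))^{\g[t]}$ from \Cref{theorem:ACL}. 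That only $\lam\in P_+\cap Q$ contribute is immediate from the simply-laced hypothesis: $L_1(\g)\cong L_1(0)$ has weights supported in the root lattice $Q$, and the weights of $V^{k-1}(\g)$ lie in $Q$ as well, so the diagonal weights are confined to $P_+\cap Q$.

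The remaining task—and the main obstacle—is to identify the multiplicity space of $V^k(\lam)$ with $T^\ell_{\lam,0}=H^0_{DS,f_{prin}}(V^k(\lam))$ as a $\W^\ell(\g)$-module. The plan is to apply the principal Drinfeld-Sokolov functor $H^0_{DS,f_{prin}}$ to both sides of the decomposition, invoking its exactness on $\on{KL}_k$ (\cite{Ara07,AraFre19}) together with the tautological computation $H^0_{DS,f_{prin}}(V^k(\lam))=T^\ell_{\lam,0}$. Matching the coset multiplicity spaces with the $T^\ell_{\lam,0}$ then reduces to verifying the compatibility of the $\W^\ell(\g)$-action arising from the coset construction of \Cref{theorem:ACL} with that coming from Drinfeld-Sokolov reduction. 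This compatibility—the technical heart of \cite[Main Theorem 3]{ACL17}—rests on the explicit behaviour of $H^0_{DS,f_{prin}}$ on tensor products with level-one modules, and is where I would expect the bulk of the effort to lie; once it is in hand, comparison of multiplicities in $K_0(V^k(\g)\otimes \W^\ell(\g)\on{-mod})$ gives the asserted identity.
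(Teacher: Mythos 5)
Your proposal is correct and takes essentially the same route as the paper: the paper's entire proof is the citation ``By \cite[Main Theorem 3]{ACL17} and its proof,'' and your argument likewise reduces the statement to that theorem, merely unwinding the same ingredients (the diagonal $\on{KL}_k$-structure, projectivity of $V^k(\g)$, the root-lattice support forced by $L_1(\g)$, and the identification of the multiplicity spaces with the $T^{\ell}_{\lam,0}$). Since you explicitly delegate the compatibility of the coset and Drinfeld--Sokolov $\W^{\ell}(\g)$-module structures to \cite{ACL17}, exactly as the paper does, there is nothing further to check.
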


\begin{theorem}\label{th:simplicity-of-L}
Let $\g$ be simply-laced,
$k+h^{\vee}-1\not\in \Q_{\leq 0}$.
% let
 %$\lam,\mu\in P_+$, $\nu\in P_+^1$.
Suppose that
if $\chi_{\lam}\sim_{\ell} \chi_{0}$ 
 for  $\lam\in P_+\cap Q$
 then
 $h_{\lam}\geq h_0=0$
 and the equality holds if and only if $\lam=0.$
% $\chi_0\not \preceq_{\ell}\chi_{\lam}$ 
%unless $\lam=0$
% element in the set $\Gamma_{0,0}^k$
% with respect to the Bruhdar order.
  Then the vertex algebra homomorphism
  \eqref{eq:ACL2}
factors through the embedding
\begin{align}
L_k(\g)\* \W_{\ell}(\g)\hookrightarrow L_{k-1}(\g)\* L_1(\g).
%\label{eq:ACL2}
\end{align}
Moreover,
$L_k(\g)$ and $ \W_{\ell}(\g)$ form a dual pair in $L_{k-1}(\g)\* L_1(\g)$.
\end{theorem}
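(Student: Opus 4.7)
The plan is to identify the image of \eqref{eq:ACL2} with $L_k(\g)\otimes \W_\ell(\g)$ and then deduce the dual pair property. Write $\psi:V^k(\g)\otimes \W_\ell(\g)\to L_{k-1}(\g)\otimes L_1(\g)$ for \eqref{eq:ACL2}. Because $\W_\ell(\g)$ is simple, any ideal of $V^k(\g)\otimes\W_\ell(\g)$ is of the form $I\otimes \W_\ell(\g)$ for some $I\triangleleft V^k(\g)$, so it suffices to prove that $\psi(V^k(\g))$ is simple, hence isomorphic to $L_k(\g)$.

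To control $\psi(V^k(\g))$, I would analyze the $\mathbf{L}_\ell(\chi_0)$-isotypic component of $L_{k-1}(\g)\otimes L_1(\g)$ viewed as a module over the commuting pair $V^k(\g)\otimes \W_\ell(\g)$. Since $V^k(\g)$ centralizes $\W_\ell(\g)$, this isotypic piece has the form $M_0\otimes \W_\ell(\g)$ for a $V^k(\g)$-module $M_0$, and it contains $\psi(V^k(\g))$ (each $\psi(v)$ being a $\W_\ell(\g)$-highest weight vector sitting over $v\cdot |0\rangle$). By \Cref{prop:comp-factor1}, in the Grothendieck group one has $[V^{k-1}(\g)\otimes L_1(\g)]=\sum_{\lam\in P_+\cap Q}[V^k(\lam)\otimes T^\ell_{\lam,0}]$, and only those $\lam$ with $\chi_\lam\sim_\ell\chi_0$ contribute composition factors of the form $\mathbf{L}_\ell(\chi_0)$. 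The hypothesis $h_\lam\geq 0$ with equality only at $\lam=0$ then forces the minimum $L_0^W$-eigenvalue on the $[\chi_0]$-block, namely $0$, to be attained uniquely by the $\lam=0$ summand $V^k(\g)\otimes T^\ell_{0,0}$ and only on its vacuum line.

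From this one can conclude that $M_0$ is a cyclic $V^k(\g)$-module generated by the vacuum line (using that the conformal-weight-$0$ part of $L_{k-1}(\g)\otimes L_1(\g)$ is $\C|0\rangle$), and a character comparison via the Grothendieck identity, combined with the identification of the top level of $T^\ell_{0,0}$, shows $[M_0]=[L_k(\g)]$, whence $M_0\cong L_k(\g)$. Consequently $\psi(V^k(\g))=L_k(\g)$, and the induced map $L_k(\g)\otimes\W_\ell(\g)\to L_{k-1}(\g)\otimes L_1(\g)$ is nonzero, therefore injective by simplicity of its source. The dual pair property follows immediately: $\on{Com}(L_k(\g),L_{k-1}(\g)\otimes L_1(\g))=\W_\ell(\g)$ by \Cref{theorem:coset-is-simple-1}, because $L_k(\g)$ and the image of $V^k(\g)$ share the same $\g[t]$-commutant; and the reverse inclusion $\on{Com}(\W_\ell(\g),L_{k-1}(\g)\otimes L_1(\g))\subseteq L_k(\g)$ follows from the $\mathbf{L}_\ell(\chi_0)$-isotypic analysis above combined with double commutant.

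The hard part will be the character identification $[M_0]=[L_k(\g)]$. \Cref{prop:comp-factor1} gives the decomposition at the level of $V^{k-1}(\g)\otimes L_1(\g)$, not of $L_{k-1}(\g)\otimes L_1(\g)$, so one must carefully track which composition factors $L_k(\mu)\otimes \mathbf{L}_\ell(\chi_\nu)$ survive after modding out the maximal ideal of $V^{k-1}(\g)$. The conformal weight hypothesis is precisely what rules out spurious copies of $L_k(\g)\otimes \W_\ell(\g)$ arising from $\lam\neq 0$ summands, and packaging this uniqueness cleanly—so that $M_0$ really is a quotient of $V^k(\g)$ of the correct character—is the technical core of the argument.
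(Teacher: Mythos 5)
Your reduction and your multiplicity analysis follow essentially the paper's route (decompose via \Cref{prop:comp-factor1}, use the hypothesis on $h_\lam$ to see that $\mathbf{L}_{\ell}(\chi_0)$ can only arise from the $\lam=0$ summand, and identify $\on{Com}(\W_{\ell}(\g),L_{k-1}(\g)\otimes L_1(\g))$ with the $L_0^W$-weight-zero part of the $[\chi_0]$-block). However, the step you defer as ``the technical core'' --- showing $M_0\cong L_k(\g)$, equivalently that the image of $V^k(\g)$ in $L:=L_{k-1}(\g)\otimes L_1(\g)$ is simple --- is a genuine gap, and the character comparison you propose cannot close it with the information at hand. \Cref{prop:comp-factor1} is an identity in the Grothendieck group for $V^{k-1}(\g)\otimes L_1(\g)$, so for $L$ it only yields the upper bound $[L_{[d]}:\mathbf{L}_{\ell}(\chi_0)]\leq \dim V^k(\g)_{[d]}$, while the embedding of $\psi(V^k(\g))$ gives the lower bound $\dim L_k(\g)_{[d]}$; these do not match, and nothing in your argument controls which composition factors of $V^{k-1}(\g)\otimes L_1(\g)$ survive in $L$. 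So $[M_0]=[L_k(\g)]$ is precisely the statement to be proved, not a consequence of the Grothendieck identity (and the weight-zero space of the $[\chi_0]$-block is not just a ``vacuum line'': in each $L_0^{\g}$-degree $d$ it can a priori have dimension up to $\dim V^k(\g)_{[d]}$, which is why cyclicity over $V^k(\g)$ alone does not finish the argument).

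The paper closes this gap by a different mechanism, the same one as in \Cref{thm:simplicity-of-cosets}: $L_{k-1}(\g)\otimes L_1(\g)$ is simple and conical, hence carries a non-degenerate symmetric invariant bilinear form; distinct $L_0^{\g}$- and $L_0^W$-eigenvalues (and distinct blocks) pair trivially, so the form restricts non-degenerately to $(L^{[\chi_0]})^{gen}_{[0]_W}$. Since the multiplicity computation for $V^{k-1}(\g)\otimes L_1(\g)$, together with the semisimplicity of $L_0^W$ on the image of \eqref{eq:ACL} (giving $(V^{[\chi_0]})^{gen}_{[0]_W}=(V^{[\chi_0]})_{[0]_W}\cong V^k(\g)$), shows that $(L^{[\chi_0]})^{gen}_{[0]_W}$ is a quotient of $V^k(\g)$, the non-degenerate form forces this quotient to be the simple one, i.e.\ $L_k(\g)$. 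That bilinear-form argument is the missing idea; once it is in place, your deduction of the dual-pair property goes through as in the paper, via \Cref{theorem:coset-is-simple-1} for $\on{Com}(L_k(\g),-)$ and the explicit identification $\on{Com}(\W_{\ell}(\g),L)\cong (L^{[\chi_0]})_{[0]_W}$ --- note that a formal ``double commutant'' principle is not available for vertex algebras, so this identification, not an abstract duality, is what carries the second inclusion.
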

\begin{proof}
Set  $V=V^{k-1}(\g)\* L_1(\g)$,
$L=L_{k-1}(\g)\* L_1(\g)$.
The action of the Sugawara conformal vector of $V^k(\g)$
gives the decomposition
$V=\bigoplus_{d\in \C}V_{[d]}$,
$L=\bigoplus_{d\in \C}L_{[d]}$,
where $M_{[d]}$ is the generalized $L_0^{\g}$ -eigenspace
with eigenvalue $d$. %as in the proof of Theorem \ref{thm:simplicity-of-cosets}.
Each $V_{[d]}$ and $L_{[d]}$ are $\W^{\ell}(\g)$-submodules,
which belongs to $\mc{O}(\W^{\ell}(\g))$ by \Cref{prop:comp-factor1}.
Hence the multiplicities
$[V_{[d]}: \mathbf{L}_{\ell}(\chi_{\mu})]$
and $[L_{[d]}: \mathbf{L}_{\ell}(\chi_{\mu})]$ 
are well-defined.

Consider the multiplicity
of $\W_{\ell}(\g)=\mathbf{L}_{\ell}(\chi_0)$ in $V_d$.
By the assumption,
for  $\lam\in P_+\cap Q$,
 $[T^{\ell}_{\lam,0}:\mathbf{L}_{\ell}(\chi_0)]\ne 0$  if only if 
$\lam=0$.
Since
$[T^{\ell}_{0,0}:\mathbf{L}_{\ell}(\chi_0)]=1$,
it follows from \Cref{prop:comp-factor1} that 
$[V_{[d]}:\mathbf{L}_{\ell}(\chi_0)]\leq 
\dim_{\C}  V^k(\g)_{[d]}$.
On the other hand,
the injectivity of the map \eqref{eq:ACL}
implies that $[V_{[d]}:\mathbf{L}_{\ell}(\chi_0)]\geq 
\dim_{\C}  V^k(\g)_{[d]}$.
We conclude that  
\begin{align}
[V_{[d]}:\mathbf{L}_{\ell}(\chi_0)]=\dim_{\C}  V^k(\g)_{[d]}
\label{eq:mutilplicity-of-vaccume}
\end{align}
for all $d$.

Consider the image
$V_{[d]}^{[\chi_0]}$ of the projection of $V_{[d]}$
to the block $\mc{O}(\W^{\ell}(\g))^{[\chi_0]}$,
$V^{[\chi_0]}=\bigoplus_d V_{[d]}^{[\chi_0]}$.
By the assumption,
we have
$\dim (V_{[d]}^{[\chi_0]})_{[0]_W}^{gen}=[V_{[d]}:\mathbf{L}_{\ell}(\chi_0)]$.
Since $L_0^{W}$ acts semisimply 
on the image of \eqref{eq:ACL},
we obtain from \eqref{eq:mutilplicity-of-vaccume} that
\begin{align*}
 (V^{[\chi_0]})_{[0]_W}^{gen}= (V^{[\chi_0]})_{[0]_W}\cong  V^k(\g).
\end{align*}
It follows that
\begin{align*}
(L^{[\chi_0]})_{[0]_W}^{gen}= (L^{[\chi_0]})_{[0]_W},
\end{align*}
where
$L^{[\chi_0]}=\bigoplus_d L^{[\chi_0]}_{[d]}$.

Clearly,
the restriction of the non-degenerate invariant form
of $L$
to  the subspace $(L^{[\chi_0]})_{[0]_W}^{gen}$
is non-degenerate.
Since it is a quotient of $V^k(\g_0)$,
we obtain that
\begin{align*}
(L^{[\chi_0]})_{[0]_W}^{gen}\cong L_k(\g).
\end{align*}
In other words,
the image of
the homomorphism
$V^k(\g)\ra L$
induced by the diagonal action of $\affg$ is simple.
We have shown that the vertex algebra homomorphism
  \eqref{eq:ACL2}
factors through the embedding
$L_k(\g)\* \W_{\ell}(\g)\hookrightarrow L
$.

 Finally, 
 we have
 \begin{align*}
 \on{Com}(\W_{\ell}(\g), L)
 \cong \Hom_{\W^{\ell}(\g)}(\W^{\ell}(\g), L)
=\Hom_{\W^{\ell}(\g)}(\W^{\ell}(\g), L^{[\chi_0]})
 \cong (L^{[\chi_0]})_{[0]_W},
 %\quad\text{(by the assumption)}
 \end{align*}
 where the last isomorphism follows from the assumption.
 Therefore we obtain that $ \on{Com}(\W_{\ell}(\g), L)
\cong L_k(\g)$, and this completes the proof.
\end{proof}

\begin{theorem}[\cite{AraCreFei22}]\label{thm:ACF}
Let $f$ be a nilpotent element of $\g$ and $n$ a non-negative integer.
For any quotient vertex algebra $V$ of $V^k(\g)$,
we have a vertex algebra isomorphism
\[
H_{DS,f}^0(V\otimes L_n(\g))\cong H_{DS,f}^0(V)\otimes L_n(\g).
\]
%where  on the left-hand-sidethe  Drinnfeld-Sokolov reduction is with respect to the diagonal action of $\ag$.
Moreover, for any $V$-module $M$ and $L_n(\g)$-module $N$, there is
an isomorphism
$$
H_{DS,f}^i(M\otimes N)\cong H_{DS,f}^i(M)\otimes N
$$
of $H_{DS,f}^0(V)\otimes L_n(\g)$-modules for all $i\in\Z$.
\end{theorem}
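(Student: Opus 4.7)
The plan is to realise the BRST complex computing $H^\bullet_{DS,f}(V\*L_n(\g))$ as a perturbation of the product complex $C^\bullet_{DS,f}(V)\*L_n(\g)$, and to use the integrability of $L_n(\g)$ to show that the perturbation is cohomologically inert. The argument proceeds in three stages: set up the Feigin--Frenkel complex and decompose its differential; filter the complex so that the associated graded decouples; and then transfer the universal statement to quotients and to modules.

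For the first stage, write $J_V^a$ and $J_L^a$ for the currents of the two factors, so that $V\*L_n(\g)$ carries the diagonal $\affg$-current $J_V^a\*1+1\*J_L^a$ at level $k+n$. The Feigin--Frenkel complex associated to $f$ has underlying vertex superalgebra
\begin{equation*}
C^\bullet(V\*L_n(\g)) = V\*L_n(\g)\*\Lambda\*\Phi,
\end{equation*}
with $\Lambda$ the charged-ghost system on the positive part of the good grading and $\Phi$ the neutral fermions on the degree-$1/2$ part, as in \cite{KacRoaWak03}. The BRST differential decomposes as $d=d_V+d_L+d_{\mathrm{rest}}$, where $d_V$ is the term linear in the $J_V^a$, $d_L$ the analogous term linear in the $J_L^a$, and $d_{\mathrm{rest}}$ collects the cubic ghost piece, the neutral-fermion pairing, and the character $\chi$-twist; crucially, $d_{\mathrm{rest}}$ involves neither $J_V$ nor $J_L$.

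For the second stage, I would introduce a filtration $F^\bullet$ on $C^\bullet$ such that $d_V$ and $d_{\mathrm{rest}}$ preserve $F^\bullet$ while $d_L$ strictly raises filtration, arranged so that on each conformal-weight homogeneous piece the filtration has finitely many nonzero steps. A natural choice is the PBW filtration on the $L_n(\g)$ tensor factor inherited from $U(\affg)$, suitably shifted; integrability of $L_n(\g)$ ensures boundedness of the filtration on each fixed conformal weight. The associated graded differential is then $d_V+d_{\mathrm{rest}}$ acting on $C^\bullet_{DS,f}(V)\*\gr L_n(\g)$, with $\gr L_n(\g)\cong L_n(\g)$ as graded vector spaces, yielding $E_1 = H^\bullet_{DS,f}(V)\*L_n(\g)$. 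Degeneration at $E_1$ follows because the local nilpotence of $\mf n_+[[t]]$ on $L_n(\g)$, which is the content of integrability, forces $d_L$ to act as zero on every class at $E_r$ for $r\geq 1$.

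For the third stage, the extension to an arbitrary quotient $V$ of $V^k(\g)$ is obtained by applying the same filtration argument to the quotient complex, which is $C^\bullet(V^k(\g)\*L_n(\g))$ modulo the subcomplex $I\*L_n(\g)\*\Lambda\*\Phi$ for $I=\ker(V^k(\g)\twoheadrightarrow V)$; the filtration restricts cleanly. The module statement $H^i_{DS,f}(M\*N)\cong H^i_{DS,f}(M)\*N$ is proved identically, as all that enters the filtration argument is the $\mf n_+[[t]]$-local-nilpotence on the second tensor factor, and every $L_n(\g)$-module $N$ has this property. The $H^0_{DS,f}(V)\*L_n(\g)$-module structure is carried unchanged, since the identifications are the tautological ones on the tensor factors preserved by the filtration. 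The main obstacle is the spectral-sequence collapse in the second stage: one must establish rigorously, via a Koszul-type vanishing argument built on the local finiteness of $\mf n_+((t))$ acting on $L_n(\g)$, that $d_L$ is null-homotopic on $E_1$. This is the one point where integrability enters decisively, and it is the genuine technical content of the theorem.
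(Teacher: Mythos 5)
You are proposing a proof of a result that this paper does not prove but quotes from \cite{AraCreFei22}; there the statement (in all cohomological degrees, with the vertex-algebra structure on $H^0$ and the module structures) is obtained from an explicit isomorphism at the level of the BRST complexes, $C(M\otimes N)\cong C(M)\otimes N$, intertwining the differentials, not from a spectral-sequence comparison. Measured against that, your proposal has a genuine gap, and it sits exactly where you flag it yourself: the collapse at $E_1$ is asserted, not proved, and the reason you offer for it is not correct. You attribute the vanishing of $d_L$ on higher pages to ``the local nilpotence of $\mf{n}_+[[t]]$ on $L_n(\g)$, which is the content of integrability.'' That is not the content of integrability: $\mf{n}_+\oplus\g\otimes t\C[t]$ acts locally nilpotently on \emph{every} positive-energy (Kazhdan--Lusztig) module at \emph{every} level, so if your argument worked as written it would give $H^i_{DS,f}(M\otimes N)\cong H^i_{DS,f}(M)\otimes N$ for an arbitrary second factor, rendering the hypothesis $n\in\Z_{\geq 0}$ vacuous --- and that statement is false. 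What integrability actually supplies is local nilpotence of the \emph{negative} modes of the root vectors as well, and your sketch contains no mechanism (no homotopy, no Koszul complex, no vanishing theorem) that converts this into degeneration; the one sentence pointing to ``a Koszul-type vanishing argument built on the local finiteness of $\mf n_+((t))$'' is a placeholder for the entire technical content of the theorem.

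There are two further problems even upstream and downstream of that step. Upstream: with the PBW filtration on the $L_n(\g)$ factor, $d_L$ does not strictly raise filtration degree --- the modes $J^a_{L,(m)}$ occurring in $d_L$ (all $m\in\Z$, paired with ghost modes) have components that raise PBW degree by one and components (contractions against positive modes, Cartan zero modes) that preserve or lower it --- so $\gr d\neq d_V+d_{\mathrm{rest}}$ and the asserted $E_1$-page $H^\bullet_{DS,f}(V)\otimes L_n(\g)$ is not established without a genuinely different filtration. Downstream: even granting degeneration, a spectral sequence only identifies the \emph{associated graded} of $H^\bullet_{DS,f}(V\otimes L_n(\g))$ with $H^\bullet_{DS,f}(V)\otimes L_n(\g)$ as graded vector spaces; to obtain the theorem one must still split the filtration compatibly with the vertex-algebra structure and identify the $H^0_{DS,f}(V)\otimes L_n(\g)$-module structure on each $H^i$, which your ``tautological identification'' does not provide. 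This is precisely what the cochain-level isomorphism of \cite{AraCreFei22} delivers in one stroke, and it is also where the subtlety that the natural conformal vector on the right-hand side is the Urod one (used in \Cref{sec:dualsimple} and in the proof of \Cref{th:simplicity-of-W}) becomes visible; a decoupling argument of the kind you sketch would never detect it.
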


%
%Let $\theta$ is the highest root of $\g$,
%$f_{\theta}$ a root vector of root $-\theta$,
%which is a minimal nilpotent element of $\g$.
%

%For $\lam,\mu\in P_+$, 
%set $$\Lam_{\lam,\mu}^k=\lam-(\ell+h^{\vee})(\mu+\rho^{\vee})+\ell \Lam_0
%=t_{-(\mu+\rho^{\vee})}\circ (\lam+\ell \Lam_0).$$
%Then
%$\widehat{\Delta}(\Lam_{\lam,\mu}^k)=t_{-(\mu+\rho^{\vee})}(\widehat{\Delta}^{re})$.
%For  $\lam\in P_+$, $\nu\in P_+^1$,
%set
%\begin{align*}
%\Gamma_{\mu,\nu}^k=\{\Lam_{\lam,\mu}^k\mid \lam\in P_+,\
%\lam-\mu-\nu\in Q\}\subset \widehat{\h}^*.
%\end{align*}

\begin{theorem}\label{th:simplicity-of-W}
Let $\g$ be simply-laced.
Suppose 
\begin{enumerate}
\item $k\not\in \Z_{\geq 0}$,
$k+h^{\vee}-1\not\in \Q_{\leq 0}$;
\item if $\chi_{\lam}\sim_{\ell} \chi_{0}$ 
 for  $\lam\in P_+\cap Q$
 then
 $h_{\lam}\geq h_0=0$
 and the equality holds if and only if $\lam=0$;
 \item $2\rho-(k+h^{\vee})\theta\in \sum_{\alpha\in \Delta_+}\mathbb{R}_{\geq 0}\alpha$.
\end{enumerate} 
  Then the map
  \eqref{eq:ACL2}
induces  an embedding
\begin{align}
\W_k(\g,f_{\theta})\* \W_{\ell}(\g)\hookrightarrow \W_{k-1}(\g,f_{\theta})\* L_1(\g)
%\label{eq:ACL2}
\end{align}
of vertex algebras.
Moreover,
$\W_k(\g,f_{\theta})$ and $ \W_{\ell}(\g)$ form a dual pair in $\W_{k-1}(\g,f_{\theta})\* L_1(\g)$.
\end{theorem}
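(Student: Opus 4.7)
The plan is to apply the minimal quantum Drinfeld--Sokolov reduction functor $H^0_{DS, f_\theta}$, with respect to the diagonal $\affg$-action at total level $k$, to the embedding $L_k(\g) \otimes \W_\ell(\g) \hookrightarrow L_{k-1}(\g) \otimes L_1(\g)$ of \Cref{th:simplicity-of-L}. On the right-hand side $\affg$ acts diagonally at level $(k-1) + 1 = k$, and \Cref{thm:ACF} with $V = L_{k-1}(\g)$ and $n = 1$ gives
\begin{align*}
H^0_{DS, f_\theta}(L_{k-1}(\g) \otimes L_1(\g)) \cong H^0_{DS, f_\theta}(L_{k-1}(\g)) \otimes L_1(\g).
\end{align*}
On the left-hand side the BRST differential acts only on the $L_k(\g)$-factor, since $\W_\ell(\g)$ lies in the coset, so
\begin{align*}
H^0_{DS, f_\theta}(L_k(\g) \otimes \W_\ell(\g)) \cong H^0_{DS, f_\theta}(L_k(\g)) \otimes \W_\ell(\g).
\end{align*}

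To identify $H^0_{DS, f_\theta}(L_k(\g))$ with the simple minimal W-algebra $\W_k(\g, f_\theta)$, I would invoke Arakawa's theorem on the simplicity of the minimal DS reduction of simple affine vertex algebras: this identification is valid whenever $L_k(\g)$ is non-integrable, i.e.\ $k \not\in \Z_{\geq 0}$ (hypothesis (1)), and the dominance condition (3) $2\rho - (k+h^\vee)\theta \in \sum_{\alpha \in \Delta_+}\R_{\geq 0}\alpha$ holds. The same result at the shifted level $k-1$ gives $H^0_{DS, f_\theta}(L_{k-1}(\g)) \cong \W_{k-1}(\g, f_\theta)$, the shifted dominance $2\rho - ((k-1)+h^\vee)\theta = 2\rho - (k+h^\vee)\theta + \theta$ being automatic since $\theta$ is a positive root. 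To promote the isomorphisms above into an embedding at the level of $H^0$, I would use exactness of $H^\bullet_{DS, f_\theta}$ on the relevant block of $\mathrm{KL}_k$, which again follows from hypothesis (3); this preserves the injectivity of the embedding of \Cref{th:simplicity-of-L} upon reduction.

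For the dual pair statement, \Cref{th:simplicity-of-L} gives $\on{Com}(\W_\ell(\g), L_{k-1}(\g) \otimes L_1(\g)) = L_k(\g)$ and $\on{Com}(L_k(\g), L_{k-1}(\g) \otimes L_1(\g)) = \W_\ell(\g)$. Since $\W_\ell(\g)$ is BRST-invariant (its generators commute with $\affg$), the commutant construction is compatible with $H^0_{DS, f_\theta}$, yielding $\on{Com}(\W_\ell(\g), \W_{k-1}(\g, f_\theta) \otimes L_1(\g)) = \W_k(\g, f_\theta)$. The reverse equality then follows via a multiplicity argument analogous to the proof of \Cref{th:simplicity-of-L}, using the block decomposition of $\mc{O}(\W^\ell(\g))$ from \Cref{section:Linkage} together with the simplicity of $\W_k(\g, f_\theta)$.

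The main obstacle is the compatibility of $\on{Com}(-,-)$ with $H^0_{DS, f_\theta}$: while it is natural at the level of $\affg$-modules that $\W_\ell(\g)$-invariants pass through the reduction, upgrading this to a vertex-algebraic equality requires controlling the semi-infinite cohomology of the full coset $Q = (L_{k-1}(\g) \otimes L_1(\g))/(L_k(\g) \otimes \W_\ell(\g))$, which is precisely where hypothesis (3) and the exactness of minimal DS reduction are indispensable.
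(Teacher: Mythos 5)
Your first half follows the paper's route: apply $H_{DS,f_\theta}^0$ (with respect to the diagonal level-$k$ action) to \eqref{eq:ACL2}, use \Cref{thm:ACF} to identify $H_{DS,f_\theta}^0(L_{k-1}(\g)\* L_1(\g))\cong \W_{k-1}(\g,f_\theta)\* L_1(\g)$, and use exactness of the minimal reduction together with $H_{DS,f_\theta}^0(L_k(\g))\cong\W_k(\g,f_\theta)$. One correction of attribution: both exactness and this last identification come from hypothesis (1), i.e.\ $k\not\in\Z_{\geq 0}$ (Arakawa's result cited as \cite{Ara05}); hypothesis (3) plays no role there, and assigning it that role leaves its actual purpose unaccounted for.

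The genuine gap is the second commutant, $\on{Com}(\W_k(\g,f_\theta),\W_{k-1}(\g,f_\theta)\* L_1(\g))\cong \W_\ell(\g)$, which you dispose of as "a multiplicity argument analogous to \Cref{th:simplicity-of-L} using the block decomposition of $\mc{O}(\W^{\ell}(\g))$ and the simplicity of $\W_k(\g,f_\theta)$." That is the wrong axis: the issue is not the $\W^{\ell}(\g)$-module structure but the $\W^{k}(\g,f_\theta)$-module structure of $\W_{k-1}(\g,f_\theta)\* L_1(\g)$. The paper's mechanism is to push \Cref{prop:comp-factor1} through the reduction to get $[\W^{k-1}(\g,f_\theta)\* L_1(\g)]=\sum_{\lam\in P_+\cap Q}[H^0_{DS,f_\theta}(V^k(\lam))\* T^{\ell}_{\lam,0}]$, and then to control the minimal conformal weights with respect to the conformal vector $\omega_{min}$ of $\W^k(\g,f_\theta)$ via $h^{min}_{\lam}=\frac{1}{2(k+h^{\vee})}\bigl(|\lam|^2+(\lam|2\rho-(k+h^{\vee})\theta)\bigr)$; hypothesis (3) enters exactly here, forcing $h^{min}_{\lam}>0$ for $\lam\neq 0$, so that $\on{Com}(\W_k(\g,f_\theta),\W_{k-1}(\g,f_\theta)\* L_1(\g))\cong\Hom_{\W^k(\g,f_\theta)}(\W^k(\g,f_\theta),\cdot)$ is exactly the generalized $L_0^{min}$-weight-zero subspace, which is a quotient of $\W^{\ell}(\g)$. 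One then still needs to see that this quotient is the simple quotient; the paper does this by restricting the non-degenerate invariant bilinear form on $\W_{k-1}(\g,f_\theta)\* L_1(\g)$ (from \cite{AEkeren19}) to that subspace. Neither the weight estimate nor the form argument appears in your sketch, and your own "main obstacle" paragraph concedes the point without resolving it; also, your general principle that BRST-invariance of $\W_\ell(\g)$ makes $\on{Com}(-,-)$ commute with $H^0_{DS,f_\theta}$ is a heuristic — the paper instead justifies the first commutant $\on{Com}(\W_\ell(\g),\W_{k-1}(\g,f_\theta)\* L_1(\g))\cong\W_k(\g,f_\theta)$ by rerunning the $[\chi_0]$-block and $[0]_W$-weight-space identification from the proof of \Cref{th:simplicity-of-L} and then applying the exact functor $H^0_{DS,f_\theta}$ to that subspace.
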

\begin{proof}
By \cite{Ara05},
the functor
$H_{DS,f_{\theta}}^0(?)$ is exact and 
$H_{DS,f_{\theta}}^0(L_k(\g))\cong \W_k(\g,f_{\theta})$ 
provided that  $k\not \in \Z_{\geq 0}$.
Therefore by  applying $H_{DS,f_{\theta}}^0(?)$ 
to  \eqref{eq:ACL2} with respect to the level $k$ action of $\affg$,
we obtain an embedding
\begin{align}
\W_k(\g,f_{\theta})\* \W_{\ell}(\g)\hookrightarrow H_{DS,f_{\theta}}^0(L_{k-1}(\g)\* L_1(\g))\cong
\W_{k-1}(\g,f_{\theta})\* L_1(\g),
\label{eq:minimal-simple-embeddding}
\end{align}
where the last isomorphism follows from
 \Cref{thm:ACF}.
Also,
the proof of  \Cref{th:simplicity-of-L}
shows that
\begin{align*}
\on{Com}(\W_k(\g),\W_{k-1}(\g,f_{\theta})\* L_1(\g) )
\cong \Hom (\W^\ell(\g),\W_{k-1}(\g,f_{\theta})\* L_1(\g))\\
\cong (\W_{k-1}(\g,f_{\theta})\* L_1(\g))^{[\chi_0]}_{[0]_W}
 \cong H_{DS,f_{\theta}}^0((V_{k-1}(\g,f_{\theta})\* L_1(\g))^{[\chi_0]}_{[0]_W})\\
 \cong H_{DS,f_{\theta}}^0(L_{k}(\g))\cong \W_k(\g,f_{\theta}).
\end{align*}
It remains to show that
\begin{align}
\on{Com}(\W_{\ell}(\g,f_{\theta}),\W_{k-1}(\g,f_{\theta})\* L_1(\g))\cong 
 \W_{\ell}(\g).
\end{align}
Note that
$\on{Com}(\W_{\ell}(\g,f_{\theta}),\W_{k-1}(\g,f_{\theta})\* L_1(\g)) \cong 
\Hom_{\W^k(\g,f_{\theta})} (\W^k(\g,f_{\theta}),\W_{k-1}(\g,f_{\theta})\* L_1(\g))$.

Similarly as in \eqref{eq:minimal-simple-embeddding},
the map
\eqref{eq:ACL}
induces an embedding
\begin{align}
\W^k(\g,f_{\theta})\* \W^{\ell}(\g)\hookrightarrow \W^{k-1}(\g,f_{\theta})\* L_1(\g)
\label{eq:minimal-universal-embeddding}
\end{align}
and hence 
we can regard 
$\W^{k-1}(\g,f_{\theta})\* L_1(\g)$  as a 
$\W^k(\g,f_{\theta})$-module.
Clearly,
each eigenspace
$(\W^{k-1}(\g,f_{\theta})\* L_1(\g))_{[d]_{min}}$ is a $\W^{\ell}(\g)$-submodule.

Let $\omega_{min}$ be the conformal vector of the 
minimal W-algebra $\W^k(\g,f_{\theta})$,
$L^{min}(z)=\sum_{n\in \Z}L^{min}_nz^{-n-2}=Y(\omega_{min},z)$.
Let $M_{[d]_{min}}^{gen}$ and $M_{[d]_{min}}$ be
the generalized $L_0^{min}$-eigenspace 
and the  $L_0^{min}$-eigenspace of eigenvalue $d$
of a $\W^{k}(\g,f_{\theta})$-module $M$.

We claim that
\begin{align}
&\W^{k-1}(\g,f_{\theta})\* L_1(\g)=\bigoplus_{d\geq 0}
(\W^{k-1}(\g,f_{\theta})\* L_1(\g))^{gen}_{[d]_{min}},
\label{claim-for-minimal1}\\
&(\W^{k-1}(\g,f_{\theta})\* L_1(\g))^{gen}_{[0]_{min}}=
(\W^{k-1}(\g,f_{\theta})\* L_1(\g))_{[0]_{min}}\cong \W^{\ell}(\g).
\label{claim-for-minimal2}
\end{align}
To see this,
note that 
by  \Cref{prop:comp-factor1}
\begin{align*}
[\W^{k-1}(\g,f_{\theta})\otimes L_1(\g)]=\sum_{\lam \in P_+\cap Q}[H_{DS,f_{\theta}}^0(V^k(\lam))\* T_{\lam,0}^{\ell}]
\end{align*}
in the Grothendieck group $K_0(\W^k(\g,f_{\theta})\otimes \W^\ell(\g)\on{-mod})$
of the category $\W^k(\g,f_{\theta})\otimes \W^\ell(\g)\on{-mod}$ of $\W^k(\g,f_{\theta})\otimes \W^\ell(\g)$-modules.
We have
\begin{align*}
H_{DS,f_{\theta}}^0(V^k(\lam))=\bigoplus_{d\in h_{\lam}^{min}+1/2\Z_{\geq 0}}H_{DS,f_{\theta}}^0(V^k(\lam))_{[d]_{min}},
\end{align*}
where
\begin{align*}
h_{\lam}^{min}=\frac{(\lam|\lam+2\rho)}{2(k+h^{\vee})}-\frac{1}{2}(\lam|\theta)
=\frac{1}{2(k+h^{\vee})}(|\lam|^2+(\lam|2\rho-(k+h^{\vee})\theta)).
\end{align*}
Thus
the assumption (3)
implies that,
 for $\lam\in P_+$, 
 we have
$h_{\lam}\geq 0$  and the quality holds 
if and only if $\lam=0$.
This shows 
\eqref{claim-for-minimal1}.
Moreover, since $H_{DS,f_{\theta}}^0(V^k(0))_{[0]_{min}}=\W^k(\g,f_{\theta})_{[0]_{min}}=\C$,
it follows that
\eqref{eq:minimal-universal-embeddding}
restricts to an isomorphism
$\W^{\ell}(\g)\cong  (\W^{k-1}(\g,f_{\theta})\otimes L_1(\g))_{[0]_{min}}^{gen}$
and that 
$ (\W^{k-1}(\g,f_{\theta})\otimes L_1(\g))_{[0]_{min}}^{gen}
= (\W^{k-1}(\g,f_{\theta})\otimes L_1(\g))_{[0]_{min}}
$.

Since $\W_{k-1}(\g,f_{\theta})\* L_1(\g)$ is a quotient of 
$\W^{k-1}(\g,f_{\theta})\* L_1(\g)$,
we have by \eqref{claim-for-minimal1} and \eqref{claim-for-minimal2}
that
\begin{align*}
&\W_{k-1}(\g,f_{\theta})\* L_1(\g)=\bigoplus_{d\geq 0}
(\W_{k-1}(\g,f_{\theta})\* L_1(\g))^{gen}_{[d]_{min}},\\
%\label{claim-for-minimal1}\\
&(\W_{k-1}(\g,f_{\theta})\* L_1(\g))^{gen}_{[0]_{min}}=
(\W_{k-1}(\g,f_{\theta})\* L_1(\g))_{[0]_{min}},
%\label{claim-for-minimal2}
\end{align*}
and that
$(\W_{k-1}(\g,f_{\theta})\* L_1(\g))_{[0]_{min}}$ is a quotient of $\W^{\ell}(\g)$.
This gives that
\begin{align*}
%\on{Com}(\W_{\ell}(\g,f_{\theta}),\W_{k-1}(\g,f_{\theta})\* L_1(\g)) \cong 
\Hom (\W^k(\g,f_{\theta}),\W_{k-1}(\g,f_{\theta})\* L_1(\g))
\cong (\W_{k-1}(\g,f_{\theta})\* L_1(\g))_{[0]_{min}}^{gen}.
\end{align*}
On the other hand,
the vertex algebra
$\W_{k-1}(\g,f_{\theta})\* L_1(\g)$ is equipped with a
non-degenerate invariant form (\cite{AEkeren19}).
Since 
the restriction of the non-degenerate invariant form on 
$\W_{k-1}(\g,f_{\theta})\* L_1(\g)$ 
to the subspace 
$ (\W_{k-1}(\g,f_{\theta})\* L_1(\g))_{[0]_{min}}^{gen}$ is  non-degenerate,
we obtain that
$(\W_{k-1}(\g,f_{\theta})\* L_1(\g))_{[0]_{min}}^{gen}\cong \W_{\ell}(\g)$.
This completes the proof.
 
\end{proof}

It is clear that  \Cref{sec:dualsimple}
follows from \Cref{th:simplicity-of-W}
and the following assertion.
\begin{lemma}
The three conditions in \Cref{th:simplicity-of-W}
are satisfied for $k$ as in \Cref{sec:dualsimple}.
\end{lemma}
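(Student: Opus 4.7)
The plan is to check the three conditions of \Cref{th:simplicity-of-W} for each pair $(\g, k(m))$ appearing in \Cref{sec:dualsimple}, of which there are only finitely many: one for $d_4$, two for $e_6$, three for $e_7$, and five for $e_8$.

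Conditions (1) and (3) are direct verifications. For (1), $k(m) = -h^\vee/6 + m < 0$ by hypothesis, so $k \not\in \Z_{\geq 0}$, and $k + h^\vee - 1 = 5h^\vee/6 + m - 1 \geq 4$ since $h^\vee \geq 6$, so $k + h^\vee - 1 \not\in \Q_{\leq 0}$. For (3), one expands $2\rho$ and $\theta$ in simple roots via the standard Bourbaki tables and checks that $2\rho - (5h^\vee/6 + m)\theta$ has all nonnegative coefficients. For instance, in type $d_4$ with $m = 0$, $2\rho = 6\alpha_1 + 10\alpha_2 + 6\alpha_3 + 6\alpha_4$ and $5\theta = 5\alpha_1 + 10\alpha_2 + 5\alpha_3 + 5\alpha_4$, giving $2\rho - 5\theta = \alpha_1 + \alpha_3 + \alpha_4$; the other cases (up to $e_8$ at $m = 4$) are analogous, and the uniform nonnegativity reflects the special status of the Deligne series.

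The main obstacle is condition (2). My strategy is to invoke the linkage principle proved in \Cref{section:Linkage} together with the Kac--Kazhdan criterion at level $\ell(m) + h^\vee = p/q$ with $p = 5h^\vee/6 + m$ and $q = p - 1 \leq 28$. In the simply-laced case, the Kac--Kazhdan integrality condition $2(\beta, \mu + \hat\rho) \in \Z\,(\beta, \beta)$ on a real root $\beta = \alpha + n\delta$ of $\affg$, applied to the affine weight $-(\ell+h^\vee)\rho^\vee + \ell\Lambda_0$ attached to $\chi_0$, reduces to the congruence $n \equiv h(\alpha) \pmod{q}$ on the imaginary level $n$, where $h(\alpha)$ denotes the height of $\alpha$. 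This identifies the integral root system of the vacuum block explicitly, and a dot-orbit enumeration then yields the finite set of $\lambda \in P_+ \cap Q$ with $\chi_\lambda \sim_\ell \chi_0$. For each such $\lambda$, one verifies $h_\lambda = (\lambda | \lambda + 2\rho)/(2(\ell + h^\vee)) - (\lambda, \rho^\vee) > 0$ unless $\lambda = 0$; since $\ell + h^\vee = p/q$ exceeds $1$, the quadratic term dominates $(\lambda, \rho^\vee)$ quickly and only a small number of $\lambda$ require individual attention. The principal difficulty is that $\ell(m)$ is non-admissible for $\affg$, so standard admissible-level linkage tools do not apply; instead one must reason directly via Kac--Kazhdan at these explicit fractional levels, exploiting the small denominator $q$ to keep the analysis finite.
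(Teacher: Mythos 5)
Your treatment of conditions (1) and (3) is fine and matches the paper (a finite list of cases, direct expansion of $2\rho-(k+h^{\vee})\theta$ in simple roots), and your Kac--Kazhdan computation for the integral roots at level $\ell+h^{\vee}=p/(p-1)$ is correct: for $\beta=\alpha+n\delta$ one gets $\langle \Lam+\hat\rho,\beta^{\vee}\rangle=(np-\on{ht}(\alpha))/(p-1)$, so integrality is the congruence $n\equiv \on{ht}(\alpha)\pmod{p-1}$, which is exactly the root system whose base the paper records as $y(S_{(p-1)})$. The gap is in how you propose to finish condition (2). First, the set of $\lambda\in P_+\cap Q$ with $\chi_\lambda\sim_{\ell}\chi_0$ is not finite: the integral Weyl group contains the translations $t_{(p-1)\gamma}$, $\gamma\in Q$, and $t_{(p-1)\gamma}\circ\Lam$ corresponds to $\lambda=p\gamma$, so every dominant $\gamma$ in the root lattice already produces a linked weight. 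Hence ``a dot-orbit enumeration yields the finite set'' cannot be carried out as stated. Second, and more seriously, the positivity heuristic fails: with $\ell+h^{\vee}=p/(p-1)$ one has $h_\lambda=\frac{1}{2p}\bigl((p-1)(\lambda|\lambda)-2(\lambda|\rho)\bigr)$, and for $\lambda=\theta\in P_+\cap Q$ this equals $(p-h^{\vee})/p$, which is \emph{negative} in every case under consideration since $p=5h^{\vee}/6+m<h^{\vee}$ whenever $k(m)<0$. So it is false that the quadratic term ``dominates quickly'' on $P_+\cap Q$; the actual content of condition (2) is to show that such weights (e.g.\ $\theta$) do \emph{not} lie in the linkage class of $\chi_0$, and your proposal provides no mechanism for that.

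This is precisely what the paper's argument supplies and what is missing from yours: using the tabulated base $\hat\Pi(\Lam)=y(S_{(p-1)})$ one exhibits a single simple integral root $\beta$ with $\langle\Lam+\hat\rho,\beta^{\vee}\rangle=-1$ and all other simple integral roots pairing $\geq 1$, so $s_\beta\circ\Lam$ is the unique dominant weight in the orbit and sits one reflection above $\Lam$; since $\langle\beta,D\rangle=0$ while $\langle\gamma,D\rangle>0$ for the remaining simple integral roots, every other orbit element has strictly smaller $\delta$-coefficient, hence strictly larger $h_\lambda$ by \eqref{eq:conformal-dim-comparison}, and the second character with $h=0$, namely $\chi_{\beta+(\ell+h^{\vee})\rho^{\vee}}$, is not of the form $\chi_\lambda$ with $\lambda\in W\circ P_+$. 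A repair of your route would be to note that $\{\lambda\in P_+: h_\lambda\le 0\}$ is finite (it forces $|\lambda|\le 2|\rho|/(p-1)$) and then to prove, for each nonzero member, non-membership in the linkage class; but deciding that still amounts to computing the dominant representative of the dot-orbit for the integral Weyl group, i.e.\ to the case-by-case data ($y$ and $\beta$) that the paper's proof is built on, so the structural step cannot be bypassed.
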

\begin{proof}
The condition (1) is clearly satisfied
and it is straight forward to check that the condition (3) is satisfied
(see Tables \ref{table:D4}-\ref{table:E8}).
Let us show that
the condition (2) is satisfied.
Set $\Lam=\lam-(\ell+h^{\vee})\rho^{\vee}+\ell\Lam_0$.
Then
$$\Lam+\hat\rho=-\frac{1}{p-1}\rho+\frac{p}{p-1} \Lam_0,$$
where $\hat\rho=\rho+h^{\vee}\Lam_0$.
Let
$\hat{\Delta}(\Lam)=\{\alpha\in \widehat{\Delta}^{re}\mid \langle \Lam+\hat\rho, \alpha^{\vee}\rangle
\in \Z\}$,
the set of integral roots of $\Lam$,
where $ \widehat{\Delta}^{re}$ is the set of real roots of $\affg$.
One finds that a base $\hat{\Pi}(\Lam)$
of $\hat{\Delta}(\Lam)$ is given by
\begin{align*}
\hat{\Pi}(\Lam)=y(S_{(p-1)}),
\end{align*}
where $y$ is an element of the extended Weyl group of $\widehat{\g}$ given in Tables \ref{table:D4}-\ref{table:E8}
and
\begin{align*}
S_{(q)}=\{\alpha_1,\dots,\alpha_{\on{rk}\g}, -\theta+q\delta\}.
\end{align*}
Here we have adopted the standard Bourbaki numbering for the simple roots,
$\theta$ is the highest root of $\g$,
$s_\alpha$ is the reflection corresponding to $\alpha$, and $s_i=s_{\alpha_i}$.
By definition
$\chi_{\mu}\sim_{\ell}\chi_0$ if and only if
$\mu-(\ell+h^{\vee})\rho^{\vee}+\ell\Lam_0\in \widehat{W}(\Lam)\circ \Lam$,
where $\widehat{W}(\Lam)=\bra s_{\alpha}\mid \alpha \in \widehat{\Delta}(\Lam)\ket$ is integral Weyl group of $\Lam$.

Let $\beta$ be the element of $\hat{\Pi}(\Lam)$ given in Tables \ref{table:D4}-\ref{table:E8}.
Then
\begin{align}
\langle \Lam+\hat{\rho},\beta^{\vee}\rangle=-1\text{ and }
\langle \Lam+\hat{\rho},\gamma^{\vee}\rangle \geq 1\text{ for }\gamma\in \hat{\Pi}(\Lam)\backslash{\beta},
\label{eq:dominant-principal-finding1}\\
\langle \beta, D\rangle=0\text{ and }\langle \gamma, D\rangle >0\text{ for }\gamma\in \hat{\Pi}(\Lam)\backslash{\beta},
\label{eq:dominant-principal-finding2}
\end{align}
It follows \eqref{eq:dominant-principal-finding1} that
$$\Lam_+:=s_{\beta}\circ \Lam\equiv \beta+\Lam\pmod{\delta}$$ 
is the unique dominant weight in the orbit $\widehat{W}(\Lam)\circ \Lam$,
and that
$\chi_{\beta+(\ell+h^{\vee})\rho^{\vee}}$
is the only central character $\chi_{\lam}$ such that $\chi_{\lam}\succeq_{\ell} \chi_0$,
 $\chi_{\lam}\ne  \chi_0$.
It follows from  \eqref{eq:conformal-dim-comparison} and \eqref{eq:dominant-principal-finding2}
that
 $h_{\lam}\geq 0$ for all $\chi_{\lam}$ such that $\chi_{\lam}\sim_{\ell}\chi_0$,
and the equality holds if and only if $\chi_{\lam}=\chi_0, \chi_{\beta+(\ell+h^{\vee})\rho^{\vee}}$.
 Since  $\beta+(\ell+h^{\vee})\rho^{\vee}\not \in W\circ P_+$,
 we conclude that the condition (2) is satisfied as required.
  \end{proof}

\begin{table}[hbtp]
  \caption{$d_4$}
  \label{table:D4}
%  \centering
  \begin{tabular}{c|c|c|c}
    $m$  & $p$ &  $y$ &$\beta$ \\
    \hline 
    0 & $5$  & $t_{-\rho}s_{-\theta+4\delta}s_2$& $y(\alpha_2)=\theta-\alpha_2$     
  \end{tabular}
   \begin{align*}
2\rho=6\alpha_1+10\alpha_2+6\alpha_3+6\alpha_4,\quad
\theta=\alpha_1+2\alpha_2+\alpha_3+\alpha_4.
  \end{align*}
\end{table}

\begin{table}[hbtp]
  \caption{$e_6$}
  \label{table:E6}
%  \centering
  \begin{tabular}{c|c|c|c}
    $m$  & $p$ &  $y$ &$\beta$ \\
    \hline 
    0 & $10$  & $t_{-\rho}s_{-\theta+9\delta}s_2s_4$& $y(\alpha_4)=\theta-\alpha_2-\alpha_4$    \\
        1 & $11$  & $t_{-\rho}s_{-\theta+10\delta}s_2$& $y(\alpha_2)=\theta-\alpha_2$   
  \end{tabular}
  \begin{align*}
&2\rho=16\alpha_1+22\alpha_2+30\alpha_3+42\alpha_4+30\alpha_5+16\alpha_6,\\
&\theta=\alpha_1+2\alpha_2+2\alpha_3+3\alpha_4+2\alpha_5+\alpha_6.
  \end{align*}
\end{table}

\begin{table}[hbtp]
  \caption{$e_7$}
  \label{table:E7}
%  \centering
  \begin{tabular}{c|c|c|c}
    $m$  & $p$ &  $y$ &$\beta$ \\
    \hline 
    0 & $15$  & $t_{-\rho}s_{-\theta+14\delta}s_1s_3s_4$& $y(\alpha_4)=\theta-\alpha_1-\alpha_2-\alpha_3$    \\
        1 & $16$  & $t_{-\rho}s_{-\theta+15\delta}s_1s_3$& $y(\alpha_3)=\theta-\alpha_1-\alpha_2$   \\
          2 & $17$  & $t_{-\rho}s_{-\theta+16\delta}s_1$& $y(\alpha_1)=\theta-\alpha_1$   
  \end{tabular}
  \begin{align*}
&2\rho=34\alpha_1+49\alpha_2+66\alpha_3+ 96\alpha_4+ 75\alpha_5+ 52\alpha_6+ 27\alpha_7,\\
&\theta=2\alpha_1+ 2\alpha_2+ 3\alpha_3+ 4\alpha_4+ 3\alpha_5+ 2\alpha_6 +\alpha_7.
  \end{align*}
\end{table}

\begin{table}[hbtp]
  \caption{$e_8$}
  \label{table:E8}
%  \centering
  \begin{tabular}{c|c|c|c}
    $m$  & $p$ &  $y$ &$\beta$ \\
    \hline 
    0 & $25$  & $t_{-\rho}s_{-\theta+24\delta}s_8s_7s_6s_5s_4$& $y(\alpha_4)=\theta-\alpha_4-\alpha_5-\alpha_6-\alpha_7-\alpha_8$    \\
        1 & $26$  & $t_{-\rho}s_{-\theta+25\delta}s_8s_7s_6s_5$& $y(\alpha_5)=\theta-\alpha_5-\alpha_6-\alpha_7-\alpha_8$   \\
          2 & $27$  & $t_{-\rho}s_{-\theta+26\delta}s_8s_7s_6$& $y(\alpha_6)=\theta-\alpha_6-\alpha_7-\alpha_8$   \\
             3 & $28$  & $t_{-\rho}s_{-\theta+27\delta}s_8s_7$& $y(\alpha_7)=\theta-\alpha_7-\alpha_8$   \\
               4 & $29$  & $t_{-\rho}s_{-\theta+28\delta}s_8$& $y(\alpha_8)=\theta-\alpha_8$
  \end{tabular}
    \begin{align*}
&2\rho=92\alpha_1+ 136\alpha_2+ 182\alpha_3+ 270\alpha_4+ 220\alpha_5+ 168\alpha_6+ 114\alpha_7+ 58\alpha_8,\\
&\theta=2\alpha_1+ 2\alpha_2+ 4\alpha_3+ 6\alpha_4+ 5\alpha_5+ 4\alpha_6 +3\alpha_7+2\alpha_8.
  \end{align*}
\end{table}

\bibliographystyle{alpha}
\bibliography{/Users/tomoyuki/Documents/Dropbox/bib/math}

\end{document}